\DeclareMathAlphabet{\mathcalligra}{T1}{calligra}{c}{h}
\providecommand{\U}[1]{\protect\rule{.1in}{.1in}}
\newtheorem{theorem}{Theorem}[section]
\newtheorem{proposition}[theorem]{Proposition}
\newtheorem{lemma}[theorem]{Lemma}
\newtheorem{corollary}[theorem]{Corollary}
\let\oldremark\remark
\renewcommand{\remark}{\oldremark\normalfont}
\newtheorem{example}[theorem]{Example}
\let\oldexample\example
\renewcommand{\example}{\oldexample\normalfont}
\let\oldexamples\examples
\renewcommand{\examples}{\oldexamples\normalfont}
\newcommand{\R}{\mathbb{R}}
\def\<{{\langle}}
\def\>{{\rangle}}
\def\bea{\begin{eqnarray*} }
\def\eea{\end{eqnarray*} }
\def\be{\begin{equation} }
\def\ee{\end{equation} }
\def\qed{\ifhmode\unskip\nobreak\fi\ifmmode\ifinner
\else\hskip5 pt \fi\fi\hbox{\hskip5 pt \vrule width4 pt
height6 pt  depth1.5 pt \hskip 1pt }}
\DeclareMathOperator*{\supp}{supp}
\DeclareMathOperator*{\grad}{grad}
\DeclareMathOperator*{\ess}{ess}
\DeclareMathOperator*{\Lip}{Lip}
\DeclareMathOperator*{\dv}{dv}
\begin{document}

\title{Normal covering spaces with maximal bottom of spectrum}
\author{Panagiotis Polymerakis}
\date{}

\maketitle

\renewcommand{\thefootnote}{\fnsymbol{footnote}}
\footnotetext{\emph{Date:} \today} 
\renewcommand{\thefootnote}{\arabic{footnote}}

\renewcommand{\thefootnote}{\fnsymbol{footnote}}
\footnotetext{\emph{2010 Mathematics Subject Classification.} 58J50, 35P15, 53C99.}
\renewcommand{\thefootnote}{\arabic{footnote}}

\renewcommand{\thefootnote}{\fnsymbol{footnote}}
\footnotetext{\emph{Key words and phrases.} Bottom of spectrum, Schr\"odinger operator, Riemannian covering, amenable covering, spectral-tightness.}
\renewcommand{\thefootnote}{\arabic{footnote}}

\begin{abstract}
We study the property of spectral-tightness of Riemannian manifolds, which means that the bottom of the spectrum of the Laplacian separates the universal covering space from any other normal covering space of a Riemannian manifold. We prove that spectral-tightness of a closed Riemannian manifold is a topological property characterized by its fundamental group. As an application, we show that a non-positively curved, closed Riemannian manifold is spectrally-tight if and only if the dimension of its Euclidean local de Rham factor is zero. In their general form, our results extend the state of the art results on the bottom of the spectrum under Riemannian coverings.
\end{abstract}

\section{Introduction}

The spectrum of the Laplacian on a Riemannian manifold is an interesting isometric invariant, which attracted much attention over the last years.
Aiming to a better comprehension of its relations with the geometry of the underlying manifold, its behavior under maps between Riemannian manifolds that respect the geometry of the manifolds to some extent has been studied.
In particular, there are various results on the behavior of the spectrum under Riemannian coverings and open questions arising from them.

To be more precise, let $p \colon M_{2} \to M_{1}$ be a Riemannian covering. Then the bottoms of the spectra of the Laplacians satisfy $\lambda_{0}(M_{1}) \leq \lambda_{0}(M_{2})$. Brooks was the first one to investigate when the equality holds, and this is closely related to the notion of amenability. The covering $p$ is called amenable if the monodromy action of $\pi_{1}(M_{1})$ on the fiber of $p$ is amenable. It is noteworthy that a normal covering is amenable if and only if its deck transformation group is amenable. Brooks proved in \cite{Brooks} that the universal covering of closed (that is, compact and without boundary) manifold preserves the bottom of the spectrum if and only if it is amenable. This result has been generalized in various ways over the last years (cf. for instance, the survey \cite{surv}). In \cite{BMP1}, we showed that if $p$ is amenable, then $\lambda_{0}(M_{1}) = \lambda_{0}(M_{2})$, without imposing any assumptions on the geometry or the topology of the manifolds. The converse implication is not true in general, but holds under a natural condition involving the bottom $\lambda_{0}^{\ess}(M_{1})$ of the essential spectrum of the Laplacian. More specifically, according to \cite[Theorem 1.2]{Mine2}, if $\lambda_{0}(M_{2}) =\lambda_{0}(M_{1}) < \lambda_{0}^{\ess}(M_{1})$, then $p$ is amenable. The situation is very unclear in the case where $\lambda_{0}(M_{1}) = \lambda_{0}^{\ess}(M_{1})$, as pointed out in \cite[Question 1.5]{surv}.

Given a normal Riemannian covering $p \colon M_{2} \to M_{1}$, besides the aforementioned inequality, we have that $\lambda_{0}(M_{2}) \leq \lambda_{0}(\tilde{M})$, where $\tilde{M}$ is the universal covering space of $M_{1}$. One of the purposes of this paper is to examine the validity of the equality. This fits into the study of manifolds with maximal bottom of spectrum under some constraint.
Examples of remarkable works on this problem are \cite{LW2} and \cite{JLW}, which focus on complete manifolds with Ricci curvature bounded from below and quotients of symmetric spaces of non-compact type, respectively.
This setting, where $M_{2}$ is a normal covering space of $M_{1}$, may seem more restrictive, but our goal is actually different. 
In addition to obtaining information about the maximizer $M_{2}$, we want to characterize the existence of a maximizer (different from  the universal covering space) in terms of properties of $M_{1}$.

Our motivation is to investigate to what extent the bottom of the spectrum of the Laplacian separates the universal covering space from the rest of the normal covering spaces of a Riemannian manifold. The corresponding question about the exponential volume growth has been addressed in \cite{MR2390354}. To be more precise, if $M_{2}$ is a normal covering space of $M_{1}$, then the exponential volume growths satisfy $\mu(M_{1}) \leq \mu(M_{2})$. Hence, the universal covering space $\tilde{M}$ of $M_{1}$ has maximal bottom of spectrum and maximal exponential volume growth among all the normal covering spaces of $M_{1}$. A Riemannian manifold $M_{1}$ is called \textit{spectrally-tight} or \textit{growth tight} if the universal covering space of $M_{1}$ is the unique normal covering space of $M_{1}$ with maximal bottom of spectrum or maximal exponential volume growth, respectively. Sambusetti proved in \cite{MR2390354} that negatively curved, closed manifolds are growth tight, which yields that negatively curved, closed, locally symmetric spaces are spectrally-tight.
One of the aims of this paper is to study the notion of spectral-tightness, establish that negatively curved, closed manifolds enjoy this property, and more generally, characterize this property for non-positively curved, closed Riemannian manifolds.

To set the stage, we consider Riemannian coverings $p \colon M_{2} \to M_{1}$ and $q \colon M_{1} \to M_{0}$, with $q$ normal, a Schr\"{o}dinger operator $S_{0}$ on $M_{0}$ and its lifts $S_{1}$, $S_{2}$ on $M_{1}$, $M_{2}$, respectively. It turns out that the validity of $\lambda_{0}(S_{2}) = \lambda_{0}(S_{1})$ is intertwined with a property similar, but weaker than the amenability of the covering.

The covering $p$ is called relatively amenable with respect to $q$, or for short, $q$-amenable if the monodromy action of $q_{*}\pi_{1}(M_{1})$  on the fiber of $q \circ p$ is amenable. It is evident that a covering is amenable if and only if it is relatively amenable with respect to the identity. 
The notion of relative amenability is naturally related to the amenability of the composition. More specifically, the composition $q \circ p$ is amenable if and only if $q$ is amenable and $p$ is $q$-amenable.
In general, amenable coverings are relatively amenable, but there exist relatively amenable coverings that are not amenable, as we will show by an example. It is worth to point out that if $q$ is finite sheeted or more importantly, if $q \circ p$ is normal, then $p$ is $q$-amenable if and only if it is amenable.
Our first result illustrates the role of this notion in the behavior of the bottom of the spectrum, extending \cite[Theorem 1.2]{BMP1} and \cite[Theorem 1.2]{Mine2}.

\begin{theorem}\label{main}
Let $p \colon M_{2} \to M_{1}$ and $q \colon M_{1} \to M_{0}$ be Riemannian coverings, with $q$ normal. Consider a Schr\"{o}dinger operator $S_{0}$ on $M_{0}$ and denote by $S_{1}$, $S_{2}$ its lift on $M_{1}$, $M_{2}$, respectively. Then:
\begin{enumerate}[topsep=0pt,itemsep=-1pt,partopsep=1ex,parsep=0.5ex,leftmargin=*, label=(\roman*), align=left, labelsep=0em]
\item If $p$ is $q$-amenable, then $\lambda_{0}(S_{2}) = \lambda_{0}(S_{1})$.

\item Conversely, if $\lambda_{0}(S_{2}) = \lambda_{0}(S_{1}) < \lambda_{0}^{\ess}(S_{0})$, then $p$ is $q$-amenable.
\end{enumerate}
\end{theorem}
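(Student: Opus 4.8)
For part (i), write $\Gamma=\pi_{1}(M_{0})$, let $\Gamma_{1}=q_{*}\pi_{1}(M_{1})$ (normal in $\Gamma$, as $q$ is normal) and $\Gamma_{2}=(q\circ p)_{*}\pi_{1}(M_{2})\leq\Gamma_{1}$, so that the fiber of $q\circ p$ is $\Gamma/\Gamma_{2}$ and its $\Gamma_{1}$-orbits are the fibers of the natural map $\Gamma/\Gamma_{2}\to\Gamma/\Gamma_{1}$. The orbit through $g\Gamma_{2}$ is, after conjugating $\Gamma_{1}$ by $g$, equivariantly isomorphic to $\Gamma_{1}/(g\Gamma_{2}g^{-1})$, and it is the fiber of the covering $p_{g}\colon M_{2}^{g}:=\widetilde{M}_{0}/(g\Gamma_{2}g^{-1})\to M_{1}$; moreover the deck transformation of $\widetilde{M}_{0}\to M_{0}$ corresponding to $g$ descends to an isometry $M_{2}\to M_{2}^{g}$ intertwining the lifts of $S_{0}$, so that $\lambda_{0}(S_{2}^{g})=\lambda_{0}(S_{2})$ for the lift $S_{2}^{g}$ of $S_{1}$ to $M_{2}^{g}$. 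Relative amenability of $p$ with respect to $q$ says exactly that, for every finite $F\subseteq\Gamma_{1}$ and every $\varepsilon>0$, there is an $(F,\varepsilon)$-almost invariant unit vector in $\ell^{1}(\Gamma/\Gamma_{2})$ for the $\Gamma_{1}$-action; restricting such a vector to a suitable single orbit yields, for some $g$, an $(F,\varepsilon)$-almost invariant unit vector in $\ell^{1}$ of the fiber of $p_{g}$, where $F$ and $\varepsilon$ are dictated by a near-minimizer of the Rayleigh quotient of $S_{1}$ and the target accuracy. Feeding this vector into the averaging construction used to prove \cite[Theorem 1.2]{BMP1} --- which requires only a single approximately invariant vector, not amenability of $p_{g}$ --- produces a test function on $M_{2}^{g}$ whose Rayleigh quotient for $S_{2}^{g}$ is arbitrarily close to $\lambda_{0}(S_{1})$; since $\lambda_{0}(S_{2}^{g})=\lambda_{0}(S_{2})$, this gives $\lambda_{0}(S_{2})\leq\lambda_{0}(S_{1})$, and the reverse inequality holds for every Riemannian covering.

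For part (ii), observe first that $\lambda_{0}(S_{0})\leq\lambda_{0}(S_{1})<\lambda_{0}^{\ess}(S_{0})$, so $\lambda_{0}(S_{0})$ is an isolated eigenvalue of $S_{0}$ with a positive $L^{2}$ ground state $\phi_{0}$. Applying the ground-state transform on $M_{0}$ and to the lifts $\phi_{1},\phi_{2}$ of $\phi_{0}$, each operator $S_{i}-\lambda_{0}(S_{0})$ becomes unitarily equivalent to the weighted Laplacian $\Delta_{i}$ of $(M_{i},\phi_{i}^{2}\,\mathrm{dv})$, the weights on $M_{1}$ and $M_{2}$ being pulled back from $M_{0}$. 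One is thereby reduced to the following: $(M_{0},\mu_{0})$ is a finite-volume weighted manifold whose weighted Laplacian $\Delta_{0}$ has a simple eigenvalue $0$ and essential spectrum bounded below by $\delta:=\lambda_{0}^{\ess}(S_{0})-\lambda_{0}(S_{0})>0$; $q\colon M_{1}\to M_{0}$ is normal, $p\colon M_{2}\to M_{1}$ is a covering, the weights are pulled back; and $\nu:=\lambda_{0}(\Delta_{1})=\lambda_{0}(\Delta_{2})<\delta$. The task is then to show $p$ is $q$-amenable --- a relative analogue of the converse in \cite[Theorem 1.2]{Mine2}.

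For this I would take $u_{n}\in C_{c}^{\infty}(M_{2})$ with $\int_{M_{2}}u_{n}^{2}\,d\mu_{2}=1$ and $\int_{M_{2}}|\nabla u_{n}|^{2}\,d\mu_{2}\to\nu$, and decompose them along a $\Gamma$-periodic partition of unity on $\widetilde{M}_{0}$ subordinate to translates of a fundamental domain, each cell being a copy of $(M_{0},\mu_{0})$. Since $\nu<\delta$, the Poincar\'e inequality on the cells forces $u_{n}$ to be, up to an error controlled by its Dirichlet energy, a cellwise superposition of the finitely many eigenfunctions of $\Delta_{0}$ below the essential spectrum with eigenvalue at most $\nu$; the resulting cell-coordinates define functions on the fiber $\Gamma/\Gamma_{2}$ of $q\circ p$, on which $\Gamma_{1}$ acts by the relative-amenability action, and $\int|\nabla u_{n}|^{2}\to\nu$ together with $\lambda_{0}(\Delta_{1})=\nu$ should force these coordinates to become asymptotically $\Gamma_{1}$-invariant, producing a Reiter sequence for the $\Gamma_{1}$-action on $\Gamma/\Gamma_{2}$, i.e.\ the $q$-amenability of $p$.

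The main obstacle, I expect, is precisely this last step. Unlike in \cite{Mine2}, there is no spectral gap available on the intermediate manifold $M_{1}$ --- when $M_{0}$ is closed one even has $\lambda_{0}^{\ess}(S_{1})=\lambda_{0}(S_{1})$ --- so the decomposition must deal at once with the fiber direction of $p$ and the possibly non-amenable deck direction of $q$, and one has to rule out that the slack in $\lambda_{0}(S_{2})\geq\lambda_{0}(S_{1})$ is entirely absorbed by the latter. Concretely, this amounts to a Cheeger--F{\o}lner-type lower bound for $\lambda_{0}(S_{2})-\lambda_{0}(S_{1})$ in terms of a relative isoperimetric constant of $p$ with respect to $q$, valid in the regime $\lambda_{0}(S_{1})<\lambda_{0}^{\ess}(S_{0})$ that keeps $\nu$ below the gap of $M_{0}$; establishing it with no bounded-geometry assumption --- using the weight $\phi_{0}^{2}$, rather than compactness, to control the non-compact directions of the covers --- is the technical heart of the proof.
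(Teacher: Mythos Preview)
Your treatment of (i) is correct and is essentially the paper's argument: the paper packages your orbit-by-orbit analysis as the single observation that the induced covering $\hat p=\sqcup_{g\in\Gamma}p_g\colon\sqcup_{g\in\Gamma}M_2\to M_1$ is amenable precisely when $p$ is $q$-amenable (Lemma~\ref{induced covering}), so $\lambda_0(\hat S)=\lambda_0(S_1)$ by Theorem~\ref{amen thm}, and $\lambda_0(\hat S)=\lambda_0(S_2)$ since every component of $\hat M$ is isometric to $M_2$ via an isometry identifying the operators.

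For (ii) your sketch has a genuine gap, namely the one you yourself flag, and two of your choices make it harder to close than it should be. First, the paper does not renormalize by the $L^2$ ground state $\phi_0$ of $S_0$; it takes a positive (generally not square-integrable) $\varphi_1\in C^\infty(M_1)$ with $S_1\varphi_1=\lambda_0(S_1)\varphi_1$, which exists by Proposition~\ref{positive spectrum}, and renormalizes $S_2$ by its lift $\varphi_2$. Then the hypothesis $\lambda_0(S_2)=\lambda_0(S_1)$ says exactly that the renormalized operator $L$ on $M_2$ has bottom zero, so one has $f_n\in C_c^\infty(M_2)$ with $\|f_n\|_{L^2_{\varphi_2}}=1$ and $\int_{M_2}\|\grad f_n\|^2\varphi_2^2\to 0$ directly, with no residual level $\nu$ to keep track of. Second, the relevant decomposition of $M_2$ is not into copies of the possibly non-compact $M_0$, but into the sets $p^{-1}(gK)$ for a fixed \emph{compact} smoothly bounded $K\subset M_1$ and $g$ ranging over the deck group $\Gamma$ of $q$. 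This is where relative amenability enters: by Proposition~\ref{relative amenability domains}, if $p$ is not $q$-amenable then there exist such a $K$ and a uniform $c>0$ with $\lambda_0^N(p^{-1}(gK))\ge c$ for every $g\in\Gamma$.

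The substitute for bounded geometry is then the Cheng--Yau Harnack inequality applied on the fixed compact $K$ to the positive eigenfunctions $\varphi_1\circ g$: it gives $\max_{gK}\varphi_1\le C\min_{gK}\varphi_1$ with $C$ independent of $g$. Combined with the uniform Neumann gap this yields $\int_{p^{-1}(gK)}\|\grad f_n\|^2\varphi_2^2\ge(c/C^2)\int_{p^{-1}(gK)}f_n^2\varphi_2^2$ for all $g$; summing over $g$ (the translates $gK$ have bounded overlap) forces the $L^2_{\varphi_2}$-mass of $f_n$ over $(q\circ p)^{-1}(q(K))$ to vanish. A fixed cutoff supported in $q(K)$ and equal to $1$ on a suitable $D_0$ then produces test functions with $S_2$-Rayleigh quotient tending to $\lambda_0(S_1)$ but supported over $M_0\smallsetminus D_0$, contradicting $\lambda_0(S_0,M_0\smallsetminus D_0)>\lambda_0(S_1)$. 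No F{\o}lner or Reiter sequence is ever extracted; the proof is by contradiction, and the ingredients your proposal is missing are the Neumann-spectrum characterization of $q$-amenability over compact domains of $M_1$ and the uniform Harnack control of $\varphi_1$ on their $\Gamma$-translates.
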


It should be noticed that if $q$ is infinite sheeted, then $\lambda_{0}(S_{1}) = \lambda_{0}^{\ess}(S_{1})$ (cf. for example \cite[Corollary 1.3]{Mine}). Therefore, the setting of Theorem \ref{main} fits into the context of \cite[Question 1.5]{surv}. Conceptually, it seems interesting that even in this special case, a property different from amenability has such an effect on the behavior of the bottom of the spectrum.

In view of Theorem \ref{main}, the study of spectral-tightness becomes quite easier, bearing in mind that in this theorem, if $M_{2}$ is simply connected, then $p$ is $q$-amenable if and only if it is amenable. More precisely, it follows that spectral-tightness of a closed manifold is a topological property determined by its fundamental group as follows:

\begin{theorem}\label{tightness}
A closed Riemannian manifold is spectrally-tight if and only if the unique normal, amenable subgroup of its fundamental group is the trivial one.
\end{theorem}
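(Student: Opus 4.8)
The plan is to deduce Theorem~\ref{tightness} from Theorem~\ref{main} by specializing to the case where the middle manifold is the one being tested for spectral-tightness and the top manifold is its universal cover. Concretely, let $M$ be a closed Riemannian manifold, write $\tilde M$ for its universal covering space, and recall that the normal covering spaces of $M$ are precisely the quotients $\tilde M / N$, where $N$ ranges over the normal subgroups of $\pi_1(M)$; the covering $\tilde M / N \to M$ is the universal covering exactly when $N$ is trivial. So spectral-tightness of $M$ amounts to the statement that $\lambda_0(\tilde M / N) < \lambda_0(\tilde M)$ for every nontrivial normal subgroup $N \trianglelefteq \pi_1(M)$.

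The first step is to rephrase the equality $\lambda_0(\tilde M / N) = \lambda_0(\tilde M)$ in the language of Theorem~\ref{main}. Set $M_0 = M$, take $q \colon M_1 \to M_0$ to be the normal covering $\tilde M / N \to M$ (so $q$ is normal, with deck group $\pi_1(M)/N$), and take $p \colon M_2 \to M_1$ to be the universal covering $\tilde M \to \tilde M / N$, so that $M_2 = \tilde M$ is simply connected and $q \circ p$ is exactly the universal covering of $M$. Apply Theorem~\ref{main} with $S_0$ the Laplacian of $M$ (so $S_1$, $S_2$ are the Laplacians of $M_1$, $M_2$, i.e., $\lambda_0(S_i) = \lambda_0(M_i)$), noting that since $M_0$ is closed its Laplacian has $\lambda_0^{\ess}(S_0) = +\infty > \lambda_0(S_1)$, so the essential-spectrum hypothesis in part~(ii) is automatically satisfied. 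Theorem~\ref{main} then gives: $\lambda_0(\tilde M) = \lambda_0(\tilde M / N)$ if and only if $p$ is $q$-amenable.

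The second step is to identify $q$-amenability of $p$ with amenability of the group $N$. Since $M_2 = \tilde M$ is simply connected, the remark recorded before Theorem~\ref{main} (``if $M_2$ is simply connected, then $p$ is $q$-amenable if and only if it is amenable'') applies; alternatively one checks directly from the definition that the monodromy action of $q_*\pi_1(M_1) = N$ on the fiber of $q \circ p$ (the universal covering of $M$) is the left translation action of $N$ on itself, which is amenable precisely when $N$ is an amenable group. Combining with the first step: for a nontrivial normal subgroup $N \trianglelefteq \pi_1(M)$, we have $\lambda_0(\tilde M / N) = \lambda_0(\tilde M)$ if and only if $N$ is amenable. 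Hence $M$ fails to be spectrally-tight (some proper normal covering has maximal bottom of spectrum) if and only if there exists a nontrivial normal amenable subgroup of $\pi_1(M)$, which is the negation of the asserted criterion; equivalently, $M$ is spectrally-tight if and only if the trivial subgroup is the only normal amenable subgroup of $\pi_1(M)$.

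The only genuinely substantive input is Theorem~\ref{main} itself, which we are entitled to assume; the rest is a careful translation between coverings and subgroups of the fundamental group. The point requiring a little care --- and the one I would expect to state explicitly rather than gloss over --- is the reduction of $q$-amenability to ordinary amenability of $N$ in the presence of a simply connected top space, together with the observation that closedness of $M$ forces $\lambda_0^{\ess}(S_0) = +\infty$ so that part~(ii) of Theorem~\ref{main} has no hypothesis to verify. One should also remark that ``the unique normal, amenable subgroup is the trivial one'' is well posed: the trivial subgroup is always normal and amenable, so the condition is that there is no \emph{other} one.
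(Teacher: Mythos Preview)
Your argument is correct and follows essentially the same route as the paper, which packages the identical reasoning into Corollary~\ref{tightness general} (the same statement under the weaker hypothesis $\lambda_0(\tilde M) < \lambda_0^{\ess}(M)$, proved via Theorem~\ref{main}(ii) together with Corollary~\ref{normal composition}) and then notes that a closed manifold has discrete spectrum. One small inaccuracy in your ``alternatively'' clause: the fiber of the universal covering $q\circ p$ is in bijection with all of $\pi_1(M)$, not with $N$, and the monodromy of $N$ on it is by right translation; however, each orbit is a coset on which $N$ acts freely and transitively, so amenability of this action is still equivalent to amenability of $N$, and your conclusion is unaffected.
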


Finally, we focus on non-positively curved, closed Riemannian manifolds. Consider such a manifold $M$. According to the de Rham decomposition theorem, the universal covering space of $M$ is written as a Riemannian product $H_{0} \times \dots \times H_{k}$ for some $k \in \mathbb{N}$, where $H_{0}$ is a Euclidean space possibly of dimension zero, and $H_{i}$ is irreducible (that is, $H_{i}$ cannot be written as a Riemannian product of two manifolds of positive dimension) for $1 \leq i \leq k$. Moreover, this decomposition is unique up to order and isometric equivalence. The space $H_{0}$ is called the Euclidean local de Rham factor of $M$. According to \cite[Theorem]{Eberlein}, the dimension of $H_{0}$ coincides with the rank of the unique maximal normal abelian subgroup of $\pi_{1}(M)$.
Using this result and Theorem \ref{tightness}, we characterize the property of spectral-tightness for non-positively curved, closed Riemanniam manifolds.

\begin{theorem}\label{neg curv}
A non-positively curved, closed Riemannian manifold is spectrally-tight if and only if the dimension of its Euclidean local de Rham factor is zero.
\end{theorem}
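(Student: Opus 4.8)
The plan is to reduce, via Theorem~\ref{tightness} and \cite[Theorem]{Eberlein}, to a purely group-theoretic statement about $\Gamma:=\pi_{1}(M)$, and then to settle it using the geometry of the Hadamard manifold $\tilde{M}$. Since $\tilde{M}$ is contractible, $M$ is aspherical and $\Gamma$ is torsion-free, so a normal abelian subgroup of $\Gamma$ is nontrivial if and only if it has positive rank; thus \cite[Theorem]{Eberlein} says that $\dim H_{0}=0$ precisely when $\Gamma$ has no nontrivial normal abelian subgroup, while Theorem~\ref{tightness} says that $M$ is spectrally-tight precisely when $\Gamma$ has no nontrivial normal amenable subgroup. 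So the theorem amounts to the equivalence: $\Gamma$ has a nontrivial normal amenable subgroup if and only if it has a nontrivial normal abelian one. One direction is trivial, since abelian groups are amenable; in particular spectrally-tight manifolds have $\dim H_{0}=0$.

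For the converse let $N\trianglelefteq\Gamma$ be amenable with $N\neq 1$; I want to produce a nontrivial normal abelian subgroup of $\Gamma$. The group $\Gamma$ acts freely, properly discontinuously and cocompactly by isometries on $X:=\tilde{M}$. By the Adams--Ballmann theorem on amenable isometry groups of Hadamard spaces, $N$ either stabilizes a flat in $X$ (a closed convex subset isometric to some $\mathbb{R}^{m}$) or fixes a point of the ideal boundary $\partial_{\infty}X$. In the first case $m\geq 1$, for a $0$-dimensional flat would be a point of $X$ fixed by the nontrivial group $N$, contradicting freeness; moreover $N$ acts freely and properly discontinuously on the flat, since the flat is closed and convex, so $N$ is virtually $\mathbb{Z}^{k}$ for some $1\leq k\leq m$ by the Bieberbach theorems, and it is torsion-free as a subgroup of $\Gamma$. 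A torsion-free virtually $\mathbb{Z}^{k}$ group has a characteristic subgroup isomorphic to $\mathbb{Z}^{k}$ (for instance its maximal normal abelian subgroup), which is therefore normal in $\Gamma$; this is the required nontrivial normal abelian subgroup, and \cite[Theorem]{Eberlein} then yields $\dim H_{0}\geq k\geq 1$.

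The second case, where $N$ fixes an ideal point $\xi\in\partial_{\infty}X$, is the step I expect to be the main obstacle. The fixed-point set $\mathrm{Fix}_{\infty}(N)\subseteq\partial_{\infty}X$ is nonempty, closed, and invariant under $\Gamma$ by normality of $N$. In the rank-one situation---e.g.\ when $M$ is negatively curved---$\Gamma$ acts minimally on $\partial_{\infty}X$, forcing $\mathrm{Fix}_{\infty}(N)=\partial_{\infty}X$; since an isometry of a Hadamard manifold fixing every ideal point is a Clifford translation and the Clifford translations of $X$ form an abelian group, $N$ is again a nontrivial normal abelian subgroup of $\Gamma$. When $\tilde{M}$ has higher-rank factors $\Gamma$ need not act minimally on $\partial_{\infty}X$ and $\xi$ cannot be propagated to the whole boundary; here I would use the de Rham decomposition $\tilde{M}=H_{0}\times\dots\times H_{k}$ and the induced splitting of a finite-index subgroup of $\Gamma$ to confine the fixed ideal point to one de Rham factor and induct on $\dim\tilde{M}$, again reaching the conclusion that $N$ consists of Clifford translations and hence $\dim H_{0}\geq 1$. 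A cleaner alternative is to invoke the structure theory of groups acting geometrically on CAT(0) spaces, which identifies the amenable radical of $\Gamma$ with a virtually free abelian subgroup of rank $\dim H_{0}$; the foregoing is essentially an unwinding of that fact in the Riemannian setting, and the boundary-fixed-point case is exactly where that structure theory does the real work.
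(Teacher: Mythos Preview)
Your reduction via Theorem~\ref{tightness} and \cite[Theorem]{Eberlein} to the purely group-theoretic statement ``$\pi_{1}(M)$ has a nontrivial normal amenable subgroup if and only if it has a nontrivial normal abelian one'' is exactly the paper's strategy, and the easy direction is handled identically. For the hard direction, however, the paper does not pass through the Adams--Ballmann dichotomy at all: it invokes \cite[Corollary~2]{MR875344} (Burger--Schroeder), which states directly that an amenable subgroup of a group acting properly discontinuously and cocompactly by isometries on a Hadamard manifold is a Bieberbach group. The translation subgroup of $N$ is then nontrivial (else $N$ would be finite, contradicting torsion-freeness of $\pi_{1}(M)$), and it is characteristic in $N$ as the unique maximal normal abelian subgroup of a Bieberbach group, hence normal in $\pi_{1}(M)$. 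This is precisely your Case~1 endgame, reached in one stroke without any case split.

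Your Case~2 is where the proposal has a genuine gap. The minimality-of-the-boundary argument you give settles only the rank-one situation (and there it really shows Case~2 cannot occur for nontrivial $N$, forcing you back to Case~1). For higher rank you offer only an outline (``induct on $\dim\tilde{M}$ via the de~Rham splitting'') and then a deferral to CAT(0) structure theory; neither is carried out. The CAT(0) result you allude to---that the amenable radical of a group acting geometrically is virtually free abelian of rank equal to the Euclidean factor---is essentially a descendant of the Burger--Schroeder theorem, so citing it would amount to the paper's route by a longer path. One further minor point: in your Case~1, the appeal to ``the Bieberbach theorems'' is loose, since $N$ need not act cocompactly on the flat; what you actually need is that every discrete subgroup of $\mathrm{Isom}(\mathbb{R}^{m})$ is virtually free abelian, which is true but is not Bieberbach's theorem as usually stated.
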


The notion of spectral-tightness seems more complicated on non-compact Riemannian manifolds. However, it is worth to mention that we prove the characterization of Theorem \ref{tightness} for any Riemannian manifold $M$ with $\lambda_0^{\ess}(M) > \lambda_0(\tilde{M})$, where $\tilde{M}$ is the universal covering space of $M$. Exploiting this, we present examples demonstrating that spectral-tightness of a non-compact Riemannian manifold depends on its Riemannian metric.

As another application of this characterization, we establish the spectral-tightness of certain geometrically finite manifolds, in the sense of Bowditch \cite{Bowditch}. Let $M$ be a complete Riemannian manifold with bounded sectional curvature $-b^2 \leq K \leq - a^2 < 0$. Then its universal covering space $\tilde{M}$ is a Hadamard manifold, and $M$ is written as a quotient $\tilde{M}/\Gamma$. Denoting by $\tilde{M}_c = \tilde{M} \cup \tilde{M}_{i}$ the geometric compactification of $\tilde{M}$, let $\Lambda \subset \tilde{M}_i$ be the limit set of $\Gamma$. Then $\Gamma$ acts properly discontinuously on $\tilde{M}_{c} \smallsetminus \Lambda$. The manifold $M$ is called geometrically finite if the topological manifold $(\tilde{M}_c \smallsetminus \Lambda) / \Gamma$ with possibly empty boundary has finitely many ends and each of them is parabolic. It is worth to mention that if $M$ is geometrically finite, then $\lambda_0^{\ess}(M) \geq \lambda_0(\tilde{M})$, according to \cite[Theorem B]{BP}.

\begin{corollary}\label{geom finite}
Let $M$ be a non-simply connected, geometrically finite manifold, and denote by $\tilde{M}$ its universal covering space. If $\lambda_0^{\ess}(M) > \lambda_0(\tilde{M})$, then $M$ is not spectrally-tight if and only if $\pi_1(M)$ is elementary.
\end{corollary}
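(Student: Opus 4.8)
The plan is to translate the statement into a purely group-theoretic dichotomy for $\Gamma := \pi_1(M)$, acting by deck transformations on the Hadamard manifold $\tilde M$ of pinched negative curvature $-b^2 \leq K \leq -a^2$, and then to settle that dichotomy with the structure theory of discrete isometry groups in negative curvature. Since $\lambda_0^{\ess}(M) > \lambda_0(\tilde M)$ by hypothesis, I would first invoke the extension of Theorem \ref{tightness} to Riemannian manifolds satisfying this inequality (mentioned in the Introduction): $M$ is spectrally-tight if and only if $\Gamma$ has no nontrivial normal amenable subgroup. It therefore suffices to prove that $\Gamma$ admits a nontrivial normal amenable subgroup if and only if $\Gamma$ is elementary, i.e. if and only if its limit set $\Lambda \subset \tilde{M}_i$ has at most two points. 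A standing observation used throughout: since $M$ is a manifold, the deck action of $\Gamma$ on $\tilde M$ is free, so $\Gamma$ is torsion-free (by the Cartan fixed-point theorem), and $\Gamma \neq \{1\}$ because $M$ is not simply connected.

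For the implication ``$\Gamma$ elementary $\Rightarrow$ $M$ not spectrally-tight'', I would argue as follows. Being torsion-free and nontrivial, $\Gamma$ is infinite, so $\Lambda \neq \emptyset$; hence $\Gamma$ fixes either one point of $\tilde{M}_i$, in which case it consists of parabolic isometries, or a pair of points, in which case it preserves the geodesic joining them. In both cases the classical structure theory of elementary discrete groups in pinched negative curvature (as developed by Bowditch \cite{Bowditch}) gives that $\Gamma$ is virtually nilpotent, hence amenable; being a nontrivial normal subgroup of itself, it witnesses, via the cited extension of Theorem \ref{tightness}, that $M$ is not spectrally-tight.

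For the converse, suppose $N \trianglelefteq \Gamma$ is a nontrivial normal amenable subgroup. Since $\Gamma$ is torsion-free, $N$ is infinite, so its limit set $\Lambda(N) \subset \tilde{M}_i$ is nonempty and closed, and $\gamma \Lambda(N) = \Lambda(\gamma N \gamma^{-1}) = \Lambda(N)$ for all $\gamma \in \Gamma$, so $\Lambda(N)$ is $\Gamma$-invariant. If $\Gamma$ were non-elementary, then $\Lambda$ would be infinite (indeed perfect) and the unique minimal nonempty closed $\Gamma$-invariant subset of $\tilde{M}_i$; minimality applied to $\emptyset \neq \Lambda(N) \subseteq \Lambda$ would give $\Lambda(N) = \Lambda$, so $N$ would be non-elementary as well. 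But a non-elementary discrete group of isometries of a pinched Hadamard manifold contains a non-abelian free subgroup --- by a ping-pong argument applied to two loxodromic elements of $N$ with disjoint fixed-point pairs at infinity, which exist since $\Lambda(N)$ is infinite --- contradicting the amenability of $N$. Hence $\Gamma$ is elementary, which finishes the proof.

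The main obstacle, I expect, is the converse direction, and within it the two structural facts it relies on: that a nontrivial normal subgroup of a non-elementary discrete group inherits the full, hence infinite, limit set --- which comes down to the minimality of the limit set of a non-elementary group --- and the Tits-alternative-type fact that non-elementary discrete isometry groups of pinched Hadamard manifolds contain free subgroups, which is what rules out amenability. The forward direction is comparatively routine, resting only on the classical fact that elementary discrete groups in this setting are virtually nilpotent, together with the already-established version of Theorem \ref{tightness} valid when $\lambda_0^{\ess}(M) > \lambda_0(\tilde M)$.
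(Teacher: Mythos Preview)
Your proof is correct and follows the same overall architecture as the paper: both reduce the statement, via Corollary~\ref{tightness general}, to the purely group-theoretic claim that $\Gamma=\pi_1(M)$ admits a nontrivial normal amenable subgroup if and only if $\Gamma$ is elementary, and both handle the direction ``$\Gamma$ elementary $\Rightarrow$ $\Gamma$ amenable $\Rightarrow$ $M$ not spectrally-tight'' in the obvious way.

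The difference lies in the converse direction, which the paper packages as Lemma~\ref{fundamental group of geom finite}. There the argument runs: the amenable normal subgroup $N$ is itself elementary by the classification result of Belegradek--Kapovitch \cite[Corollary~1.2]{MR2275833}; being torsion-free and nontrivial it is parabolic or axial, and then one checks by hand that normality forces all of $\Gamma$ to fix the same ideal point (parabolic case) or the same axis (axial case). Your route instead uses the dynamics on the boundary directly: the $\Gamma$-invariance of the nonempty closed set $\Lambda(N)$, together with minimality of the limit set of a non-elementary group, forces $\Lambda(N)=\Lambda$, whence $N$ is non-elementary and a ping-pong argument produces a free subgroup, contradicting amenability. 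Your argument is more self-contained, avoiding the appeal to \cite{MR2275833}, and is the standard ``normal subgroups of non-elementary groups are non-elementary'' argument familiar from Kleinian and hyperbolic group theory; the paper's version trades this for a short case analysis once the black box from \cite{MR2275833} is accepted. Both are perfectly sound.
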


The paper is organized as follows: In Section \ref{Preliminaries}, we give some preliminaries on Schr\"{o}dinger operators, amenable actions and coverings. In Section \ref{Rel amen}, we introduce the notion of relatively amenable coverings and discuss some basic properties. Section \ref{Spectrum} is devoted to Theorem \ref{main} and some applications. In Section \ref{Tightness}, we focus on the notion of spectral-tightness and establish Theorems \ref{tightness}, \ref{neg curv}, and Corollary \ref{geom finite}. \medskip

\textbf{Acknowledgements.} I would like to thank Werner Ballmann for some very helpful comments and remarks. I am also grateful to the Max Planck Institute for Mathematics in Bonn for its support and hospitality.

\section{Preliminaries}\label{Preliminaries}

Throughout this paper, manifolds are assumed to be connected and without boundary, unless otherwise stated. Moreover, non-connected manifolds are assumed to have at most countably many connected components.

A \textit{Schr\"{o}dinger operator} $S$ on a possibly non-connected Riemannian manifold $M$ is an operator of the form $S = \Delta + V$, where $\Delta$ is the Laplacian and $V \in C^{\infty}(M)$, such that there exists $c \in \mathbb{R}$ satisfying
\[
\langle Sf , f \rangle_{L^{2}(M)} \geq c \| f \|^{2}_{L^{2}(M)}
\]
for any $f \in C^{\infty}_{c}(M)$. Then the linear operator
\[
S \colon C^{\infty}_{c}(M) \subset L^{2}(M) \to L^{2}(M)
\]
is densely defined, symmetric and bounded from below. Hence, it admits Friedrichs extension. The (essential) spectrum of the Friedrichs extension of this operator is called the (essential, respectively) spectrum of $S$.

The spectrum and the essential spectrum of $S$ are denoted by $\sigma(S)$ and $\sigma_{\ess}(S)$, and their bottoms (that is, their infimums) by $\lambda_{0}(S)$ and $\lambda_{0}^{\ess}(S)$, respectively. In the case of the Laplacian (that is, $V=0$), these sets and quantities are denoted by $\sigma(M)$, $\sigma_{\ess}(M)$ and $\lambda_{0}(M)$, $\lambda_{0}^{\ess}(M)$, respectively. We have by definition that $\lambda_{0}^{\ess}(S) = + \infty$ if $\sigma_{\ess}(S)$ is empty, and we then say that $S$ has \textit{discrete spectrum}.

The \textit{Rayleigh quotient} of a non-zero $f \in \Lip_{c}(M)$ with respect to $S$ is defined by
\begin{equation}\label{Def Rayleigh quotient}
\mathcal{R}_{S}(f) = \frac{\int_{M} (\| \grad f \|^{2} + V f^{2})}{\int_{M} f^{2}}.
\end{equation}
The Rayleigh quotient of $f$ with respect to the Laplacian is denoted by $\mathcal{R}(f)$. According to the following proposition, the bottom of the spectrum of $S$ is expressed as an infimum of Rayleigh quotients.

\begin{proposition}\label{bottom}
Let $S$ be a Schr\"{o}dinger operator on a possibly non-connected Riemannian manifold $M$. Then the bottom of the spectrum of $S$ is given by
\[
\lambda_{0}(S) = \inf_{f} \mathcal{R}_{S}(f),
\]
where the infimum is taken over all $f \in C^{\infty}_{c}(M) \smallsetminus \{0\}$, or over all $f \in \Lip_{c}(M) \smallsetminus \{0\}$.
\end{proposition}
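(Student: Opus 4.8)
The plan is to realize $\lambda_0(S)$ as the bottom of the spectrum of the Friedrichs extension $S_F$ of $S$ and to exploit the variational description of the latter through its quadratic form. For $f\in C^\infty_c(M)$, integrating by parts on the boundaryless manifold $M$ gives
\[
\langle Sf,f\rangle_{L^2(M)} \;=\; \int_M\big(\|\grad f\|^2 + Vf^2\big) \;=\; \mathcal{R}_{S}(f)\,\|f\|^2_{L^2(M)},
\]
and, with $c$ as in the definition of a Schr\"odinger operator, the quadratic form $Q_S(f)=\langle Sf,f\rangle_{L^2(M)}$ satisfies $Q_S(f)+(1-c)\|f\|^2_{L^2(M)}\ge\|f\|^2_{L^2(M)}$, hence is closable. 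By construction, the form domain $\mathcal D$ of $S_F$ is the completion of $C^\infty_c(M)$ with respect to $\|f\|_Q:=\big(Q_S(f)+(1-c)\|f\|^2_{L^2(M)}\big)^{1/2}$, on which $Q_S$ and $\|\cdot\|_{L^2(M)}$ extend continuously. The standard variational principle for a self-adjoint operator bounded below then yields
\[
\lambda_0(S)=\inf\sigma(S_F)=\inf\Big\{\,Q_S(f)/\|f\|^2_{L^2(M)} : f\in\mathcal D\setminus\{0\}\,\Big\},
\]
and since $C^\infty_c(M)$ is $\|\cdot\|_Q$-dense in $\mathcal D$ while the numerator and denominator are $\|\cdot\|_Q$-continuous, this infimum is already attained along $C^\infty_c(M)\setminus\{0\}$, where the quotient is exactly $\mathcal{R}_{S}(f)$. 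This gives the first expression for $\lambda_0(S)$.

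For the second one, the inclusion $C^\infty_c(M)\subset\Lip_c(M)$ gives $\inf_{\Lip_c}\mathcal{R}_{S}\le\lambda_0(S)$ at once. For the reverse inequality I would fix $f\in\Lip_c(M)\setminus\{0\}$, choose a compact neighborhood $K$ of $\supp f$, cover $K$ by finitely many charts with a subordinate partition of unity, and mollify $f$ chartwise so as to produce $f_n\in C^\infty_c(M)$ with $\supp f_n\subset K$ for all $n$ and $f_n\to f$ in $W^{1,2}(M)$; this is legitimate because a compactly supported Lipschitz function lies in $W^{1,\infty}(M)\subset W^{1,2}(M)$. As $V$ is continuous, hence bounded on $K$, we obtain $\int_M\|\grad f_n\|^2\to\int_M\|\grad f\|^2$, $\int_M Vf_n^2\to\int_M Vf^2$ and $\|f_n\|_{L^2(M)}\to\|f\|_{L^2(M)}\neq0$, so $\mathcal{R}_{S}(f_n)\to\mathcal{R}_{S}(f)$. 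Since $\mathcal{R}_{S}(f_n)\ge\lambda_0(S)$ by the first part, letting $n\to\infty$ gives $\mathcal{R}_{S}(f)\ge\lambda_0(S)$, and taking the infimum over $f$ completes the argument.

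The one step that requires genuine care — and essentially the only obstacle here — is the smoothing on the manifold: one must keep the supports of the approximants inside the fixed compact set $K$ and ensure convergence in $W^{1,2}$, which is where the local chart structure and the partition of unity enter. The possible disconnectedness of $M$ is harmless, since any compactly supported function meets only finitely many components, so the above reduces to the connected case (equivalently, $L^2(M)$, $S_F$ and $Q_S$ split as the corresponding direct sums).
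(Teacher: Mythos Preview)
The paper does not supply a proof of this proposition; it is stated as a standard fact and used freely thereafter. Your argument is correct and is exactly the expected one: the variational characterization of the bottom of the spectrum of the Friedrichs extension, combined with the $\|\cdot\|_Q$-density of $C^\infty_c(M)$ in the form domain, yields the first formula, and a chartwise mollification with supports kept in a fixed compact $K$ (so that $V$ is bounded there) gives the passage to $\Lip_c(M)$.
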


In the case where $M$ is connected, the bottom of the spectrum of $S$ is characterized as the maximum of the positive spectrum of $S$ (cf. for instance \cite[Theorem 3.1]{surv} and the references therein).

\begin{proposition}\label{positive spectrum}
Let $S$ be a Schr\"{o}dinger operator on a Riemannian manifold $M$. Then $\lambda_{0}(S)$ is the maximum of all $\lambda \in \mathbb{R}$ such that there exists a positive $\varphi \in C^{\infty}(M)$ satisfying $S \varphi = \lambda\varphi$.
\end{proposition}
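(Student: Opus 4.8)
The plan is to prove the two halves of the statement separately: that every $\lambda$ admitting a positive solution satisfies $\lambda \le \lambda_{0}(S)$, and that $\lambda_{0}(S)$ itself admits such a positive solution, so that the supremum is attained and is in fact a maximum equal to $\lambda_{0}(S)$.

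For the first (easy) half, I would use the ground-state substitution. Suppose $\varphi \in C^{\infty}(M)$ is positive with $S\varphi = \lambda\varphi$. Given any $f \in C^{\infty}_{c}(M) \smallsetminus \{0\}$, set $u = f/\varphi$, which lies in $C^{\infty}_{c}(M)$ because $\varphi > 0$. Expanding $\grad f = u\,\grad\varphi + \varphi\,\grad u$ and integrating by parts the cross term $\int_{M} \varphi \langle \grad\varphi, \grad(u^{2}) \rangle$ against the equation $\Delta\varphi + V\varphi = \lambda\varphi$, I would obtain the identity
\[
\int_{M} (\|\grad f\|^{2} + V f^{2}) = \lambda \int_{M} f^{2} + \int_{M} \varphi^{2} \|\grad u\|^{2}.
\]
Since the last term is nonnegative, $\mathcal{R}_{S}(f) \ge \lambda$ for every admissible $f$, and Proposition \ref{bottom} then yields $\lambda_{0}(S) \ge \lambda$. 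Thus every $\lambda$ admitting a positive solution is dominated by $\lambda_{0}(S)$, and it remains only to realize $\lambda_{0}(S)$ itself by a positive solution.

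For the second (harder) half, I would construct a positive solution at $\lambda = \lambda_{0}(S)$ by a compact-exhaustion and limiting argument. Fix a base point $x_{0}$ and an exhaustion $\Omega_{1} \Subset \Omega_{2} \Subset \cdots$ of $M$ by smoothly bounded, relatively compact domains. On each $\Omega_{i}$ let $\lambda_{i}$ be the bottom of the Dirichlet spectrum of $S$ and $\varphi_{i} > 0$ the corresponding first Dirichlet eigenfunction, normalized by $\varphi_{i}(x_{0}) = 1$; from the variational characterization in Proposition \ref{bottom}, domain monotonicity gives that the $\lambda_{i}$ decrease to $\lambda_{0}(S)$. Since the $\lambda_{i}$ are bounded, the Harnack inequality applied to the nonnegative solutions $\varphi_{i}$ of $(S - \lambda_{i})\varphi_{i} = 0$ furnishes, on each fixed compact set and for large $i$, upper and lower bounds uniform in $i$ once the normalization at $x_{0}$ is invoked; interior Schauder estimates then upgrade these to uniform $C^{k}_{\mathrm{loc}}$ bounds.

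Passing to a subsequence converging in $C^{\infty}_{\mathrm{loc}}(M)$, I would obtain a limit $\varphi \in C^{\infty}(M)$ with $\varphi(x_{0}) = 1$, $\varphi \ge 0$, and $S\varphi = \lambda_{0}(S)\varphi$; the strong maximum principle (or the Harnack inequality again) forces $\varphi > 0$ everywhere. Hence $\lambda_{0}(S)$ admits a positive solution, and combined with the first half it is the maximum of all $\lambda$ for which such a solution exists. The main obstacle is this second half: extracting local estimates uniform in $i$ from the single normalization $\varphi_{i}(x_{0}) = 1$, for which the Harnack inequality is essential, and then justifying the $C^{\infty}_{\mathrm{loc}}$-convergence of a subsequence via elliptic regularity, together with the verification that $\lambda_{i} \to \lambda_{0}(S)$.
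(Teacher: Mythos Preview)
Your proof is correct, and in fact the paper does not supply its own argument for this proposition: it merely cites \cite[Theorem 3.1]{surv} and the references therein, treating the result as known. So there is no paper proof to compare against, and what you have written is essentially the standard proof one finds in that literature (the Allegretto--Piepenbrink/Fischer-Colbrie--Schoen/Sullivan circle of arguments).

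A couple of remarks on how your approach dovetails with the surrounding text. Your first half, the ground-state substitution identity
\[
\int_{M} (\|\grad f\|^{2} + V f^{2}) = \lambda \int_{M} f^{2} + \int_{M} \varphi^{2} \|\grad u\|^{2},
\]
is exactly what the paper packages immediately afterward as the \emph{renormalization} $L = m_{\varphi}^{-1}\circ(S-\lambda)\circ m_{\varphi}$ and its consequence $\mathcal{R}_{L}(u) = \mathcal{R}_{S}(u\varphi) - \lambda$ in Proposition~\ref{renormalization}; so your easy direction is already implicit in the paper's framework, just stated in different language. For the second half, your compact-exhaustion argument---principal Dirichlet eigenfunctions on $\Omega_{i}$, domain monotonicity giving $\lambda_{i}\downarrow\lambda_{0}(S)$, Harnack for uniform local bounds, Schauder estimates, a diagonal subsequence, and the strong maximum principle for strict positivity of the limit---is the classical construction and requires no special features beyond the standing assumptions (connectedness of $M$, smoothness of $V$). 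The one point worth making explicit is that Harnack is applied to the operators $\Delta + (V - \lambda_{i})$ with $\lambda_{i}$ varying, but since the $\lambda_{i}$ are bounded and $V$ is smooth, on any fixed compact set the Harnack constants are uniform in $i$; you note this but it is worth stating cleanly in a final write-up.
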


It should be emphasized that the positive, smooth functions involved in this proposition are not required to be square-integrable.

We now focus on the essential spectrum of a Schr\"{o}dinger operator $S$ on a (connected) Riemannian manifold $M$. The decomposition principle asserts that
\[
\sigma_{\ess}(S) = \sigma_{\ess}(S , M \smallsetminus K)
\]
for any smoothly bounded, compact domain $K$ of $M$. This is well known in the case where $M$ is complete (compare with \cite[Proposition 2.1]{MR544241}), but also holds if $M$ is non-complete, as explained for example in \cite[Theorem A.17]{surv}. This yields the following expression for the bottom of the essential spectrum of $S$.

\begin{proposition}\label{bottom essential}
Let $S$ be a Schr\"{o}dinger operator on a Riemannian manifold $M$, and $(K_{n})_{n \in \mathbb{N}}$ an exhausting sequence of $M$ consisting of compact domains of $M$. Then the bottom of the essential spectrum of $S$ is given by
\[
\lambda_{0}^{\ess}(S) = \lim_{n} \lambda_{0}(S, M \smallsetminus K_{n}).
\]
\end{proposition}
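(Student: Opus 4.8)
The plan is to establish that the sequence $\big(\lambda_0(S,M\smallsetminus K_n)\big)_n$ converges, and then prove the two inequalities $\lim_n\lambda_0(S,M\smallsetminus K_n)\le\lambda_0^{\ess}(S)$ and $\lambda_0^{\ess}(S)\le\lim_n\lambda_0(S,M\smallsetminus K_n)$ separately. Since $K_n\subset K_{n+1}$, we have $M\smallsetminus K_{n+1}\subset M\smallsetminus K_n$, hence $C^\infty_c(M\smallsetminus K_{n+1})\subset C^\infty_c(M\smallsetminus K_n)$, and Proposition \ref{bottom} gives $\lambda_0(S,M\smallsetminus K_n)\le\lambda_0(S,M\smallsetminus K_{n+1})$; thus the sequence is non-decreasing and converges in $(-\infty,+\infty]$ to some $\Lambda$.

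For $\Lambda\le\lambda_0^{\ess}(S)$, fix $n$ and choose a smoothly bounded, compact domain $K_n'$ with $K_n\subset K_n'$ (for instance a regular sublevel set of a proper smooth exhaustion function). By the monotonicity just used, $\lambda_0(S,M\smallsetminus K_n)\le\lambda_0(S,M\smallsetminus K_n')$; since the essential spectrum is contained in the spectrum, $\lambda_0(S,M\smallsetminus K_n')\le\lambda_0^{\ess}(S,M\smallsetminus K_n')$; and by the decomposition principle $\lambda_0^{\ess}(S,M\smallsetminus K_n')=\lambda_0^{\ess}(S)$. Combining these and letting $n\to\infty$ yields $\Lambda\le\lambda_0^{\ess}(S)$; in particular, if $\Lambda=+\infty$ we are done, so from now on assume $\Lambda<+\infty$.

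For the reverse inequality, the idea is to extract from functions nearly realizing the numbers $\lambda_0(S,M\smallsetminus K_n)$ an orthonormal sequence with pairwise disjoint supports whose Rayleigh quotients tend to $\Lambda$, and then invoke the min-max principle. Since $\lambda_0(S,M\smallsetminus K_n)\le\Lambda$ for every $n$, Proposition \ref{bottom} provides $g_n\in C^\infty_c(M\smallsetminus K_n)$ with $\|g_n\|_{L^2(M)}=1$ and $\mathcal{R}_S(g_n)\le\Lambda+1/n$. Whenever finitely many $g_{n_1},\dots,g_{n_k}$ have been chosen with pairwise disjoint supports, the union of their supports is compact, hence contained in some $K_m$ because $(K_n)$ exhausts $M$; picking $n_{k+1}>m$ forces $\supp g_{n_{k+1}}\subset M\smallsetminus K_m$ to be disjoint from all the previous supports. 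This produces a subsequence $(g_{n_k})$ with pairwise disjoint supports, hence orthonormal in $L^2(M)$; moreover the bilinear form $(f,h)\mapsto\int_M(\langle\grad f,\grad h\rangle+Vfh)$ vanishes on any distinct pair $g_{n_j},g_{n_k}$, so any finite linear combination $g=\sum_k a_kg_{n_k}$ satisfies $\mathcal{R}_S(g)=\big(\sum_k|a_k|^2\mathcal{R}_S(g_{n_k})\big)/\sum_k|a_k|^2\le\Lambda+1/n_{k_0}$, where $n_{k_0}=\min\{n_k:a_k\ne0\}$. Thus for every $N$ the Rayleigh quotient of $S$ is at most $\Lambda+1/n_N$ on the infinite-dimensional subspace $\spa\{g_{n_k}:k\ge N\}$, and the min-max characterization $\lambda_0^{\ess}(S)=\sup_m\inf_{\dim W=m}\sup_{0\ne g\in W}\mathcal{R}_S(g)$ forces $\lambda_0^{\ess}(S)\le\Lambda+1/n_N$. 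Letting $N\to\infty$ completes the proof.

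The step demanding the most care is the last one — that a single infinite-dimensional subspace on which the Rayleigh quotient stays $\le c$ already forces $\lambda_0^{\ess}(S)\le c$. This is exactly one direction of the min-max principle for the bottom of the essential spectrum: for each $m$, restricting to $m$-dimensional subspaces of our fixed infinite-dimensional space bounds the $m$-th min-max value by $c$, and $\lambda_0^{\ess}(S)$ is the supremum of those values. The disjointness of supports is precisely what makes the cross terms of the form vanish, keeping this step elementary; everything else is monotonicity of $\lambda_0$ under shrinking the domain together with the decomposition principle.
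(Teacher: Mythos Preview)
Your proof is correct. The paper does not give a detailed proof of this proposition; it merely states the decomposition principle $\sigma_{\ess}(S)=\sigma_{\ess}(S,M\smallsetminus K)$ and then asserts that ``this yields'' the expression in the proposition. Your argument makes explicit what the paper leaves implicit.

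For the inequality $\Lambda\le\lambda_0^{\ess}(S)$ your use of the decomposition principle (after passing to a smoothly bounded $K_n'$) is exactly what the paper's remark suggests. For the reverse inequality $\lambda_0^{\ess}(S)\le\Lambda$, the paper says nothing, and your construction of an orthonormal sequence with pairwise disjoint supports together with the min--max characterization of $\lambda_0^{\ess}$ is the standard way to close this gap. The disjoint-support trick is clean and avoids any subtle approximation issues, since $C^\infty_c(M)$ is a form core for the Friedrichs extension. So your write-up is more complete than the paper's, and follows the route the paper gestures toward for the half it addresses.
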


Consider a positive $\varphi \in C^{\infty}(M)$ with $S \varphi = \lambda \varphi$ for some $\lambda \in \mathbb{R}$. Denote by $L^{2}_{\varphi}(M)$ the $L^{2}$-space of $M$ with respect to the measure $\varphi^{2} \dv$, where $\dv$ stands for the volume element of $M$ induced from its Riemannian metric. It is immediate to verify that the map $m_{\varphi} \colon L^{2}_{\varphi}(M) \to L^{2}(M)$ defined by $m_{\varphi}(f) = f \varphi$, is an isometric isomorphism. It is easily checked that $m_{\varphi}$ intertwines $S - \lambda$ with the diffusion operator
\[
L = m_{\varphi}^{-1} \circ (S - \lambda) \circ m_{\varphi} = \Delta - 2 \grad \ln \varphi.
\]
The operator $L$ is called the \textit{renormalization} of $S$ with respect to $\varphi$.
The \textit{Rayleigh quotient} of a non-zero $f \in C^{\infty}_{c}(M)$ with respect to $L$ is defined as
\[
\mathcal{R}_{L}(f) = \frac{\langle Lf , f \rangle_{L^{2}_{\varphi}(M)}}{\| f \|^{2}_{L^{2}_{\varphi}(M)}} = \frac{\int_{M} \| \grad f \|^{2} \varphi^{2}}{\int_{M} f^{2} \varphi^{2}}.
\]

\begin{proposition}\label{renormalization}
The Rayleigh quotients of any non-zero $f \in C^{\infty}_{c}(M)$ are related by $\mathcal{R}_{L}(f) = \mathcal{R}_{S}(f \varphi) - \lambda$. In particular, we have that
\[
\lambda_{0}(S) - \lambda = \inf_{f} \mathcal{R}_{L}(f),
\]
where the infimum is taken over all non-zero $f \in C^{\infty}_{c}(M)$.
\end{proposition}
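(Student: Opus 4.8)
The plan is to establish the pointwise-in-$f$ identity $\mathcal{R}_{L}(f) = \mathcal{R}_{S}(f\varphi) - \lambda$ first, and then to read off the displayed formula for $\lambda_{0}(S) - \lambda$ as an immediate consequence of Proposition \ref{bottom}.

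For the identity itself, I would exploit what has already been recorded just above the statement: the multiplication map $m_{\varphi}$ is an isometric isomorphism of $L^{2}_{\varphi}(M)$ onto $L^{2}(M)$, it carries $C^{\infty}_{c}(M)$ onto $C^{\infty}_{c}(M)$ (since $\varphi$ is smooth and strictly positive), and on $C^{\infty}_{c}(M)$ it intertwines $L$ with $S - \lambda$. Hence, for non-zero $f \in C^{\infty}_{c}(M)$,
\[
\langle L f , f \rangle_{L^{2}_{\varphi}(M)} = \langle m_{\varphi} L f , m_{\varphi} f \rangle_{L^{2}(M)} = \langle (S - \lambda)(f\varphi) , f\varphi \rangle_{L^{2}(M)} .
\]
Since $f\varphi \in C^{\infty}_{c}(M)$, integration by parts gives $\langle S(f\varphi) , f\varphi \rangle_{L^{2}(M)} = \int_{M}(\| \grad(f\varphi) \|^{2} + V (f\varphi)^{2})$, so the last expression equals $(\mathcal{R}_{S}(f\varphi) - \lambda)\, \| f\varphi \|^{2}_{L^{2}(M)}$. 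Dividing by $\| f \|^{2}_{L^{2}_{\varphi}(M)} = \| f\varphi \|^{2}_{L^{2}(M)}$ yields $\mathcal{R}_{L}(f) = \mathcal{R}_{S}(f\varphi) - \lambda$.

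One could instead verify this by a direct computation: writing $g = f\varphi$ and using $\grad g = \varphi \grad f + f \grad\varphi$, the cross terms in $\int_{M}(\| \grad g \|^{2} + V g^{2})$ assemble into $\int_{M} \langle \grad(f^{2}\varphi) , \grad\varphi \rangle$, which after an integration by parts (no boundary terms, as $f^{2}\varphi$ has compact support) becomes $\int_{M} f^{2}\varphi\,\Delta\varphi = \int_{M} (\lambda - V) f^{2}\varphi^{2}$ by the equation $\Delta\varphi = (\lambda - V)\varphi$; the resulting $-\int_{M} V f^{2}\varphi^{2}$ cancels $\int_{M} V g^{2}$, leaving $\int_{M}(\| \grad g \|^{2} + V g^{2}) = \int_{M} \varphi^{2} \| \grad f \|^{2} + \lambda \int_{M} f^{2}\varphi^{2}$, and division by $\int_{M} g^{2}$ gives the same identity.

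Finally, because $f \mapsto f\varphi$ is a bijection of $C^{\infty}_{c}(M) \smallsetminus \{0\}$ onto itself, taking the infimum of the identity just proved over all non-zero $f \in C^{\infty}_{c}(M)$ and invoking Proposition \ref{bottom} gives $\inf_{f} \mathcal{R}_{L}(f) = \inf_{f} \mathcal{R}_{S}(f\varphi) - \lambda = \lambda_{0}(S) - \lambda$. I do not expect a genuine obstacle here; the only points demanding attention are the bookkeeping of the sign convention for $\Delta$ in the integration by parts and the (routine) verification that $m_{\varphi}$ restricts to a bijection of $C^{\infty}_{c}(M)$.
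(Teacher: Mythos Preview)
Your proposal is correct and follows essentially the same approach as the paper: the paper's proof simply states that the first identity follows from a straightforward computation using the definition of $L$ and the fact that $m_{\varphi}$ is an isometric isomorphism, and then invokes Proposition~\ref{bottom} for the second assertion. Your write-up merely unpacks these two lines in detail (and offers an equivalent direct computation as an alternative), so there is no substantive difference in method.
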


\begin{proof}
The first equality follows from a straightforward computation, using the definition of $L$ and that $m_{\varphi}$ is an isometric isomorphism. This, together with Proposition \ref{bottom}, implies the second statement.\qed
\end{proof}\medskip

Even though our main results involve manifolds without boundary, it is quite important to consider manifolds with boundary in intermediate steps. Let $M$ be a possibly non-connected Riemannian manifold with smooth boundary, and denote by $\nu$ the outward pointing, unit normal to the boundary. Then the Laplacian on $M$ regarded as
\[
\Delta \colon \{ f \in C^{\infty}_{c}(M) : \nu(f) = 0 \text{ on } \partial M \} \subset L^{2}(M) \to L^{2}(M)
\]
admits Friedrichs extension, being densely defined, symmetric and bounded from below. The spectrum of the Friedrichs extension of this operator is called the \textit{Neumann spectrum} of $M$, and its bottom is denoted by $\lambda_{0}^{N}(M)$. We recall the following expression for the bottom of the Neumann spectrum, where $\mathcal{R}(f)$ is defined as in (\ref{Def Rayleigh quotient}) with $V=0$. This may be found for instance in \cite[Proposition 3.2]{Mine2}.

\begin{proposition}\label{bottom Neumann}
Let $M$ be a possibly non-connected Riemannian manifold with smooth boundary. Then the bottom of the Neumann spectrum of $M$ is given by
\[
\lambda_{0}^{N}(M) = \inf_{f} \mathcal{R}(f),
\]
where the infimum is taken over all non-zero $f \in C^{\infty}_{c}(M)$.
\end{proposition}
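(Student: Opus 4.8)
The plan is to combine the general variational description of the bottom of the spectrum of a Friedrichs extension with a boundary approximation argument. Write $D = \{ f \in C^{\infty}_{c}(M) : \nu(f) = 0 \text{ on } \partial M \}$ for the core on which the Neumann Laplacian is defined. Since $D$ contains every function compactly supported in the interior of $M$, it is dense in $L^{2}(M)$, so $\Delta|_{D}$ is densely defined, symmetric and bounded from below, and $\lambda_{0}^{N}(M)$ is the bottom of the spectrum of its Friedrichs extension. For any such operator, the bottom of the spectrum equals the infimum of the associated Rayleigh quotient over the core: the quadratic form of the Friedrichs extension is the closure of $f \mapsto \langle \Delta f , f \rangle_{L^{2}(M)}$ on $D$, and passing to the closure does not change the infimum of the form over unit vectors. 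For $f \in D$, Green's identity gives $\langle \Delta f , f \rangle_{L^{2}(M)} = \int_{M} \| \grad f \|^{2} - \int_{\partial M} \nu(f)\, f$, and the boundary term vanishes by the Neumann condition. Hence $\lambda_{0}^{N}(M) = \inf_{f \in D \smallsetminus \{0\}} \mathcal{R}(f)$.

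It remains to show that enlarging the admissible class from $D$ to all of $C^{\infty}_{c}(M)$ leaves the infimum of $\mathcal{R}$ unchanged. One inequality is immediate: since $D \subset C^{\infty}_{c}(M)$, we have $\inf_{C^{\infty}_{c}(M)} \mathcal{R} \leq \inf_{D} \mathcal{R} = \lambda_{0}^{N}(M)$. For the reverse inequality I would fix a nonzero $f \in C^{\infty}_{c}(M)$ and produce functions in $D$ whose Rayleigh quotients approach $\mathcal{R}(f)$, by correcting $f$ in a collar of the boundary so as to kill its normal derivative while perturbing it only slightly in $H^{1}(M)$.

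Concretely, choose a collar $\partial M \times [0, 2\varepsilon)$ with $t$ the distance to $\partial M$, let $b(x) = \nu(f)(x,0)$, pick a cutoff $\psi$ equal to $1$ near $0$ and supported in $[0,1]$, and set $g_{\delta} = f - b\, t\, \psi(t/\delta)$ on the collar and $g_{\delta} = f$ elsewhere. A direct computation shows $\nu(g_{\delta}) = 0$ on $\partial M$, so $g_{\delta} \in D$, while the correction term is supported in $\{ t \leq \delta \}$ and satisfies $\| g_{\delta} - f \|_{H^{1}(M)}^{2} = O(\delta)$: its values and its tangential gradient are pointwise $O(\delta)$ on a set of measure $O(\delta)$, and its normal derivative, though only bounded, is likewise supported on a set of measure $O(\delta)$, which produces the dominant contribution. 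Because $g_{\delta} \to f$ in $H^{1}(M)$ and $\int_{M} f^{2} > 0$, the Rayleigh quotient is continuous at $f$, so $\mathcal{R}(g_{\delta}) \to \mathcal{R}(f)$. Thus $\inf_{D} \mathcal{R} \leq \mathcal{R}(f)$ for every nonzero $f \in C^{\infty}_{c}(M)$, which yields the reverse inequality and hence the claimed equality.

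The main obstacle is the boundary approximation of the last paragraph: one must verify that the normal-derivative correction can be made arbitrarily small in $H^{1}(M)$ even though it carries an $O(1)$ normal derivative, which is exactly what the factor $t$ and the shrinking collar width $\delta$ arrange. The remaining points are routine. For a non-connected $M$ with at most countably many components, every $f \in C^{\infty}_{c}(M)$ has support meeting only finitely many components, so both the spectral and the variational quantities reduce to the connected case componentwise and the argument goes through verbatim.
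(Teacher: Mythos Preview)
The paper does not actually prove this proposition; immediately before stating it, the author writes ``This may be found for instance in \cite[Proposition 3.2]{Mine2}'' and gives no argument. So there is no proof in the paper to compare yours against.

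Your approach is the standard one and is essentially correct. One small slip: with $\nu$ the \emph{outward} unit normal (as the paper fixes) and $t$ the distance to $\partial M$, one has $\nu = -\partial_{t}$ on the boundary, so your correction $g_{\delta} = f - b\,t\,\psi(t/\delta)$ with $b = \nu(f)|_{\partial M}$ gives $\nu(g_{\delta})|_{\partial M} = 2b$, not $0$. Flipping the sign of the correction term fixes this, and then the $H^{1}$ estimate and the conclusion go through exactly as you describe.
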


It should be noticed that in this proposition, the test functions $f \in C^{\infty}_{c}(M)$ do not have to satisfy any boundary condition.

\subsection{Amenable actions and coverings}

Let $X$ be a countable set and consider a right action of a discrete, countable group $\Gamma$ on $X$. This action is called \textit{amenable} if there exists an \textit{invariant mean} on $\ell^{\infty}(X)$; that is, a linear functional $\mu \colon \ell^{\infty}(X) \to \mathbb{R}$ such that
\[
\inf f \leq \mu(f) \leq \sup f \text{ and } \mu(g^{*} f) = \mu(f)
\]
for any $f \in \ell^{\infty}(X)$ and $g \in \Gamma$, where $g^{*}f(x) := f(xg)$ for any $x \in X$. It should be observed that if the action of $\Gamma$ on the orbit of some $x \in X$ is amenable, then the action of $\Gamma$ on $X$ is amenable.

A group $\Gamma$ is called \textit{amenable} if the right action of $\Gamma$ on itself is amenable. Standard examples of amenable groups are solvable groups and finitely generated groups of subexponential growth. It is worth to mention that the free group in two generators, as well as any group containing it, is non-amenable. It is not difficult to see that if $\Gamma$ is an amenable group, then any action of $\Gamma$ is amenable.

The following characterization of amenability is due to F\o{}lner.

\begin{proposition}
The right action of $\Gamma$ on $X$ is amenable if and only if for any $\varepsilon > 0$ and any finite subset $G$ of $\Gamma$ there exists a finite subset $F$ of $X$ such that $|F g \smallsetminus F| < \varepsilon |F|$ for any $g \in G$.
\end{proposition}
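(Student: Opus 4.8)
The plan is to prove both implications through the standard passage between invariant means and approximately invariant, finitely supported probability densities, combined with a layer-cake argument relating an $\ell^{1}$-density to the family of its superlevel sets. Throughout, $\ell^{1}(X)$ carries the action induced from that on $X$, compatible with the action $g^{*}f(x)=f(xg)$ on $\ell^{\infty}(X)$.

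For the implication from the Følner condition to amenability, I would associate to each finite subset $F \subset X$ the mean $\mu_{F}$ on $\ell^{\infty}(X)$ given by $\mu_{F}(f) = |F|^{-1}\sum_{x\in F} f(x)$, which manifestly satisfies $\inf f \le \mu_{F}(f) \le \sup f$. The set of all means is weak-$*$ compact by Banach--Alaoglu, so, directing the pairs $(G,\varepsilon)$ by inclusion of $G$ and decreasing $\varepsilon$, the net of means $\mu_{F}$ attached to the Følner sets $F$ has a weak-$*$ cluster point $\mu$, which is again a mean. For invariance one estimates, for fixed $g \in \Gamma$ and $f \in \ell^{\infty}(X)$,
\[
|\mu_{F}(g^{*}f) - \mu_{F}(f)| \le \frac{\|f\|_{\infty}}{|F|}\,|Fg \triangle F|,
\]
and since $|Fg| = |F|$ forces $|F \smallsetminus Fg| = |Fg \smallsetminus F|$, the Følner bound gives $|Fg \triangle F| < 2\varepsilon |F|$; hence the right-hand side tends to $0$ along the net, so the cluster point satisfies $\mu(g^{*}f) = \mu(f)$.

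The converse is the substantive direction. First I would convert the invariant mean into norm-approximately invariant densities: the finitely supported probability densities embed weak-$*$ densely into the set of means, so an invariant mean is a weak-$*$ limit of such densities. This yields, for each finite $G \subset \Gamma$ and each $\varepsilon > 0$, a probability density $\phi \in \ell^{1}(X)$ with $g^{*}\phi - \phi$ arbitrarily small in the weak topology for every $g \in G$. Upgrading weak smallness to norm smallness is the key step: since the map $\phi \mapsto (g^{*}\phi - \phi)_{g\in G}$ is affine and the probability densities form a convex set, Mazur's theorem (Day's trick) produces a single probability density $\phi$ with $\|g^{*}\phi - \phi\|_{1} < \varepsilon/|G|$ for all $g \in G$.

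Finally I would round $\phi$ to a Følner set by the layer-cake decomposition. Writing $F_{t} = \{x : \phi(x) > t\}$, a finite set for $t > 0$, one has $\int_{0}^{\infty} |F_{t}|\,dt = \|\phi\|_{1} = 1$, and, using $F_{t}^{g^{*}\phi} = F_{t}^{\phi} g^{-1}$ together with $|Ag^{-1}\triangle A| = |Ag \triangle A|$, the identity $\int_{0}^{\infty} |F_{t} g \triangle F_{t}|\,dt = \|g^{*}\phi - \phi\|_{1}$. Summing over $g \in G$, bounding $|F_{t}g \smallsetminus F_{t}| \le |F_{t}g \triangle F_{t}|$, and comparing with $\varepsilon \int_{0}^{\infty} |F_{t}|\,dt = \varepsilon$ yields
\[
\int_{0}^{\infty} \Big( \sum_{g \in G} |F_{t}g \smallsetminus F_{t}| - \varepsilon |F_{t}| \Big)\,dt < 0,
\]
so the integrand is negative on a set of positive measure; any such $t$ forces $|F_{t}| > 0$, and $F = F_{t}$ is then a finite, nonempty subset of $X$ with $|Fg \smallsetminus F| < \varepsilon |F|$ for every $g \in G$. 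I expect the main obstacle to be the weak-to-norm upgrade via Day's trick, since the remaining arguments are bookkeeping with the layer-cake formula and the averaging over $t$.
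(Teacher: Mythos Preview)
The paper does not prove this proposition; it merely states it and attributes the characterization to F\o{}lner, treating it as a standard preliminary fact. There is therefore no argument in the paper to compare your proposal against.

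That said, your proof is the standard Day--Namioka route and is correct. The backward direction (F\o{}lner $\Rightarrow$ invariant mean) via Banach--Alaoglu on the averaging means $\mu_F$ is clean. For the forward direction, your three steps---weak-$*$ density of finitely supported probability densities among means, Mazur's theorem to upgrade weak to norm almost-invariance (Day's trick), and the layer-cake identity $\int_0^\infty |F_t g \triangle F_t|\,dt = \|g^*\phi - \phi\|_1$ to extract a F\o{}lner set---are exactly the classical argument; the bookkeeping with $F_t^{g^*\phi} = F_t^\phi g^{-1}$ and $|Ag^{-1}\triangle A| = |Ag\triangle A|$ is correct. One small remark: the strict inequality $\|g^*\phi-\phi\|_1 < \varepsilon/|G|$ is what guarantees the integrand is strictly negative on a set of positive measure, so that the chosen $F_t$ is automatically nonempty; you handle this correctly.
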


In particular, it follows that the right action of $\Gamma$ on $X$ is amenable if and only if the right action of any finitely generated subgroup of $\Gamma$ on $X$ is amenable. Moreover, if the action of $\Gamma$ on $X$ has finitely many orbits $X_{i}$, $1 \leq i \leq n$, then the action of $\Gamma$ on $X$ is amenable if and only if the action of $\Gamma$ on $X_{i}$ is amenable for some $1 \leq i \leq n$.

Let $p \colon M_{2} \to M_{1}$ be a smooth covering, where $M_{1}$ has possibly empty, smooth boundary and $M_{2}$ is possibly non-connected. Fix a point $x \in M_{1}$ and consider the fundamental group $\pi_{1}(M_{1})$ with base point $x$. For $g \in \pi_{1}(M_{1})$, let $\gamma_{g}$ be a representative loop of $g$ based at $x$. Given $y \in p^{-1}(x)$, let $\tilde{\gamma}_{g}$ be the lift of $\gamma_{g}$ starting at $y$ and denote its endpoint by $yg$. In this way, we obtain a right action of $\pi_{1}(M_{1})$ on $p^{-1}(x)$, which is called the \textit{monodromy action} of the covering. The covering $p$ is called \textit{amenable} if its monodromy action is amenable. It is easy to see that if $M_{2}$ is connected and $p$ is normal, then $p$ is amenable if and only if its deck transformation group is amenable.

\begin{example}
For any smooth covering $p \colon M_{2} \to M_{1}$, the covering
\[
p \sqcup \text{Id} \colon M_{2} \sqcup M_{1} \to M_{1}
\]
is amenable.
\end{example}

Recall that  F\o{}lner's condition characterizes the amenability of an action in terms of the action of finitely generated subgroups. In the context of coverings, this yields the following characterization in terms of smoothly bounded, compact domains.

\begin{proposition}[{\cite[Proposition 2.14]{surv}}]\label{amen domains}
Let $p \colon M_{2} \to M_{1}$ be a smooth covering, where $M_{2}$ is possibly non-connected, and $(K_{n})_{n \in \mathbb{N}}$ an exhausting sequence of $M_{1}$ consisting of smoothly bounded, compact domains. Then $p$ is amenable if and only the restriction $p \colon p^{-1}(K_{n}) \to K_{n}$ is amenable for any $n \in \mathbb{N}$.
\end{proposition}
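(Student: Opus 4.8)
The plan is to translate the statement into a purely combinatorial assertion about group actions and then invoke F\o{}lner's condition. Fix a base point $x$ lying in the interior of $K_{1}$, hence in every $K_{n}$, and set $X = p^{-1}(x)$ and $\Gamma = \pi_{1}(M_{1},x)$, with $\Gamma$ acting on the right on $X$ by the monodromy of $p$. For each $n$, let $\Gamma_{n}$ be the image of the homomorphism $\pi_{1}(K_{n},x) \to \Gamma$ induced by the inclusion $K_{n} \hookrightarrow M_{1}$. Since $K_{n} \subset K_{n+1}$ we have $\Gamma_{n} \subseteq \Gamma_{n+1}$, and since $(K_{n})$ is exhausting, every loop at $x$ has image contained in some $K_{n}$, so $\bigcup_{n} \Gamma_{n} = \Gamma$.

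First I would record the local reformulation. The restriction $p \colon p^{-1}(K_{n}) \to K_{n}$ has fiber $X$ over $x$, and because a loop contained in $K_{n}$ has the same lift whether computed via $p \colon p^{-1}(K_{n}) \to K_{n}$ or via $p \colon M_{2} \to M_{1}$, its monodromy action is exactly the action of $\pi_{1}(K_{n},x)$ on $X$ obtained by composing the monodromy of $p$ with $\pi_{1}(K_{n},x) \to \Gamma$. As this factors through the surjection $\pi_{1}(K_{n},x) \twoheadrightarrow \Gamma_{n}$, and amenability of an action is unaffected by pulling it back along a surjection, the restriction $p \colon p^{-1}(K_{n}) \to K_{n}$ is amenable if and only if the action of $\Gamma_{n}$ on $X$ is amenable. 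So the proposition reduces to: the $\Gamma$-action on $X$ is amenable if and only if the $\Gamma_{n}$-action on $X$ is amenable for every $n$.

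The forward direction is immediate: an invariant mean $\mu$ on $\ell^{\infty}(X)$ for the $\Gamma$-action satisfies $\mu(g^{*}f) = \mu(f)$ in particular for every $g \in \Gamma_{n}$, so the same $\mu$ witnesses amenability of each $\Gamma_{n}$-action. For the converse I would use the F\o{}lner characterization stated above. Given $\varepsilon > 0$ and a finite subset $G \subset \Gamma$, the chain $\Gamma_{1} \subseteq \Gamma_{2} \subseteq \cdots$ with union $\Gamma$ forces $G \subseteq \Gamma_{N}$ for some $N$; amenability of the $\Gamma_{N}$-action on $X$ then yields a finite $F \subseteq X$ with $|Fg \smallsetminus F| < \varepsilon |F|$ for all $g \in G$. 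This is precisely the F\o{}lner condition for the $\Gamma$-action on $X$, so that action, equivalently the covering $p$, is amenable.

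The arguments above are all soft; the only point requiring care is the opening reduction, namely verifying that the monodromy of the restricted covering over $K_{n}$ is the restriction of the monodromy of $p$ along $\pi_{1}(K_{n}) \to \pi_{1}(M_{1})$ and that these images exhaust $\pi_{1}(M_{1})$. This is routine covering-space theory once the base point has been fixed inside every $K_{n}$, which is harmless: discarding finitely many initial terms of an exhausting sequence affects neither the hypothesis nor the conclusion, by the monotonicity $\Gamma_{n} \subseteq \Gamma_{n+1}$.
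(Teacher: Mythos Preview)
The paper does not prove this proposition; it is quoted from \cite[Proposition 2.14]{surv}, with only the preceding sentence indicating that it follows from F\o{}lner's characterization of amenability in terms of finitely generated subgroups. Your argument is a correct and complete implementation of exactly that idea: you identify the monodromy of $p\!\restriction_{p^{-1}(K_n)}$ with the action of the image $\Gamma_n$ of $\pi_1(K_n,x)\to\pi_1(M_1,x)$ on $X=p^{-1}(x)$, observe that $\bigcup_n\Gamma_n=\Gamma$ by exhaustion, and then use F\o{}lner (any finite $G\subset\Gamma$ lies in some $\Gamma_N$) for the nontrivial direction. This matches the intended approach.
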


This demonstrates the importance of considering non-connected covering spaces, since $p^{-1}(K)$ does not have to be connected even if $M_{2}$ is connected.

Finally, we briefly recall some results on the bottom of the spectrum under Riemannian coverings, which will be used in the sequel. Let $p \colon M_{2} \to M_{1}$ be a Riemannian covering, with $M_{2}$ possibly non-connected, $S_{1}$ a Schr\"{o}dinger operator on $M_{1}$ and $S_{2}$ its lift on $M_{2}$. From Proposition \ref{positive spectrum} it is not hard to see that the bottoms of the spectra satisfy
\begin{equation}\label{general inequality}
\lambda_{0}(S_{1}) \leq \lambda_{0}(S_{2}).
\end{equation}
The validity of the equality is closely related to the amenability of the covering.

\begin{theorem}\label{amen thm}
Let $p \colon M_{2} \to M_{1}$ be an amenable Riemannian covering, with $M_{2}$ possibly non-connected. Consider a Schr\"{o}dinger operator $S_{1}$ on $M_{1}$ and its lift $S_{2}$ on $M_{2}$. Then $\lambda_{0}(S_{2}) = \lambda_{0}(S_{1})$.
\end{theorem}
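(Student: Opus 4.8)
The plan is to establish the two inequalities $\lambda_{0}(S_{1}) \leq \lambda_{0}(S_{2})$ and $\lambda_{0}(S_{2}) \leq \lambda_{0}(S_{1})$ separately. The first is exactly (\ref{general inequality}) and makes no use of amenability, so the whole content lies in the reverse inequality, which I would obtain by a renormalization reduction followed by a F\o lner-type construction of test functions. By Proposition \ref{positive spectrum} choose a positive $\varphi \in C^{\infty}(M_{1})$ with $S_{1}\varphi = \lambda_{0}(S_{1})\varphi$, and let $\tilde\varphi = \varphi \circ p$ be its lift, so that $S_{2}\tilde\varphi = \lambda_{0}(S_{1})\tilde\varphi$. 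Let $L_{1} = \Delta - 2\grad\ln\varphi$ and $L_{2} = \Delta - 2\grad\ln\tilde\varphi$ be the associated renormalizations, acting on $L^{2}_{\varphi}(M_{1})$ and $L^{2}_{\tilde\varphi}(M_{2})$. Applying Proposition \ref{renormalization} (whose proof carries over to the possibly non-connected $M_{2}$) with $\lambda = \lambda_{0}(S_{1})$ gives $\lambda_{0}(S_{i}) - \lambda_{0}(S_{1}) = \inf_{f}\mathcal{R}_{L_{i}}(f)$; in particular $\inf_{f}\mathcal{R}_{L_{1}}(f) = 0$ and $\inf_{f}\mathcal{R}_{L_{2}}(f) \geq 0$ by (\ref{general inequality}), so it suffices to show that for every $\varepsilon > 0$ there is a non-zero $f \in C^{\infty}_{c}(M_{2})$ with
\[
\mathcal{R}_{L_{2}}(f) = \frac{\int_{M_{2}}\|\grad f\|^{2}\tilde\varphi^{2}}{\int_{M_{2}}f^{2}\tilde\varphi^{2}} < \varepsilon .
\]

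To build such an $f$, fix $\varepsilon > 0$, pick $h \in C^{\infty}_{c}(M_{1})$ with $h \geq 0$, $h \not\equiv 0$ and $\int_{M_{1}}\|\grad h\|^{2}\varphi^{2} < \varepsilon \int_{M_{1}}h^{2}\varphi^{2}$ (possible since $\inf\mathcal{R}_{L_{1}} = 0$), and enclose $\supp h$ in a smoothly bounded compact domain $\Omega \subset M_{1}$. By Proposition \ref{amen domains} the restricted covering $p \colon p^{-1}(\Omega) \to \Omega$ is amenable, so the monodromy action of $\pi_{1}(\Omega,x)$ — a finitely generated group, $\Omega$ being compact — on the fiber $p^{-1}(x)$ is amenable; by F\o lner's criterion, for every $\delta > 0$ there is a finite subset $\Phi \subset p^{-1}(x)$ with $|\Phi g \smallsetminus \Phi| < \delta|\Phi|$ for each $g$ in a fixed finite generating set of $\pi_{1}(\Omega,x)$. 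Cover $\Omega$ (and a neighborhood of it) by finitely many evenly covered open sets with a subordinate partition of unity, and use the monodromy to assemble from $\Phi$ a relatively compact region $W \subset p^{-1}(\Omega)$ consisting of $|\Phi|$ ``copies'' of $\Omega$, glued along the parts of $p^{-1}(\partial\Omega)$ where two chosen copies meet; let $f \in C^{\infty}_{c}(M_{2})$ be obtained by placing a copy of $h$ on each copy of $\Omega$ in $W$ and cutting off near $\partial W$. Since distinct copies of $h$ have essentially disjoint supports, $\int_{M_{2}}f^{2}\tilde\varphi^{2} \geq (1 - C\delta)\,|\Phi|\int_{M_{1}}h^{2}\varphi^{2}$, while $\int_{M_{2}}\|\grad f\|^{2}\tilde\varphi^{2} \leq |\Phi|\int_{M_{1}}\|\grad h\|^{2}\varphi^{2} + C\delta\,|\Phi|$, the error being concentrated near the ``F\o lner boundary'' $\bigcup_{g}\Phi g \smallsetminus \Phi$ and $C$ depending only on $h$ and $\Omega$. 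Taking $\delta$ small enough yields $\mathcal{R}_{L_{2}}(f) < 2\varepsilon$, hence $\inf\mathcal{R}_{L_{2}} = 0$, i.e. $\lambda_{0}(S_{2}) = \lambda_{0}(S_{1})$. The argument refers only to the fiber $p^{-1}(x)$ and its monodromy action, so the possible non-connectedness of $M_{2}$ and of $p^{-1}(\Omega)$ is harmless; alternatively, one may restrict the amenable action to a suitable orbit and treat one component of $M_{2}$ at a time.

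The step I expect to be the main obstacle is making this last construction precise, and above all estimating the Dirichlet energy of $f$. When $\supp h$ is not simply connected, $h$ does not lift to disjoint compactly supported bumps on $p^{-1}(\Omega)$, so ``summing $|\Phi|$ copies of $h$'' must be organized carefully through the monodromy — this is the role of the partition of unity and of the choice of $W$. The delicate inequality is the bound on $\int_{M_{2}}\|\grad f\|^{2}\tilde\varphi^{2}$: the naive contribution $|\Phi|\int_{M_{1}}\|\grad h\|^{2}\varphi^{2}$ must be corrected for the mismatch of copies near the parts of $p^{-1}(\partial\Omega)$ where the F\o lner region fails to be invariant, and the whole point — which is where amenability is used, in the spirit of Brooks' original argument — is that this correction is controlled by $|\Phi g \smallsetminus \Phi|$, and is therefore negligible compared with $|\Phi|$ rather than of the same order. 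Everything else (the renormalization reduction, the choice of $h$, and the passage to the compact domain $\Omega$) is routine given the propositions already recorded above.
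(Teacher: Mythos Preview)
The paper does not give its own proof of this theorem; it simply cites \cite[Theorem 1.2]{BMP1} for the connected case and remarks that the extension to non-connected $M_2$ is immediate (also referring to \cite[Theorem A]{orbi}). Your sketch is precisely the \cite{BMP1} argument---renormalize by a positive $\lambda_0(S_1)$-eigenfunction via Proposition \ref{renormalization}, then build test functions on $M_2$ by a F\o lner-type averaging over the fiber, with the energy defect controlled by the F\o lner boundary---so your approach coincides with the one the paper invokes.
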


In the case where $M_{2}$ is connected, this coincides with \cite[Theorem 1.2]{BMP1}. With very slight modifications, its proof extends \cite[Theorem 1.2]{BMP1} to the case where $M_{2}$ is possibly non-connected. This may be found also in \cite[Theorem A]{orbi}, which involves Riemannian coverings of orbifolds, where the covering space may be non-connected.

Amenability of a Riemannian covering of a compact manifold with smooth boundary is characterized in terms of the Neumann spectrum, according to the following analogue of Brooks' result \cite{Brooks}.

\begin{theorem}\label{Neumann thm}
Let $p \colon M_{2} \to M_{1}$ be a Riemannian covering, where $M_{1}$ is compact with smooth boundary and $M_{2}$ is possibly non-connected. Then $p$ is amenable if and only if $\lambda_{0}^{N}(M_{2}) = 0$.
\end{theorem}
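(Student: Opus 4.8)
The plan is to adapt Brooks' discretization argument \cite{Brooks} to a compact base \emph{with} boundary, where the Neumann Rayleigh quotient plays the role of the ordinary one in the closed case. Since $\lambda_0^N(M_2) \geq 0$ always (the Rayleigh quotient in Proposition \ref{bottom Neumann} is nonnegative), only the value $0$ is at stake, and the test functions carry no boundary condition. First I would set up a combinatorial model: using the compactness of $M_1$, fix a finite cover $\mathcal{U} = \{U_1,\dots,U_m\}$ of $M_1$ by precompact, connected, simply connected, evenly covered open sets (small enough that their overlap pattern is controlled) together with a subordinate smooth partition of unity with uniformly bounded gradients. The connected components of the sets $p^{-1}(U_i)$ --- the \emph{sheets} --- are the vertices of a graph $\mathcal{G}$ of uniformly bounded degree, two sheets being joined by an edge when they overlap in $M_2$; the monodromy action of $\pi_1(M_1)$ on the fiber of $p$ is amenable precisely when $\mathcal{G}$ admits a F\o{}lner sequence, equivalently when its combinatorial Cheeger constant vanishes. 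All constants appearing below depend only on $M_1$ and this fixed data, hence are uniform over the (possibly infinitely many) sheets and components of $M_2$.

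For the implication ``amenable $\Rightarrow \lambda_0^N(M_2)=0$'', given $\varepsilon>0$ I would pick a finite set $A$ of sheets with $|\partial_{\mathcal{G}} A| < \varepsilon|A|$, set $W = \bigcup_{V\in A} V$ (an open set with compact closure in $M_2$), and take a smoothed indicator $f\in C^\infty_c(M_2)$ of $W$: equal to $1$ outside a fixed neighbourhood of the overlap regions between $A$ and its complement, equal to $0$ off $W$, and with $\|\grad f\|$ bounded by a uniform constant. Then $\int_{M_2}\|\grad f\|^2 \lesssim |\partial_{\mathcal{G}} A|$ while $\int_{M_2} f^2 \gtrsim |A| - |\partial_{\mathcal{G}} A| \geq (1-\varepsilon)|A|$, so $\mathcal{R}(f) \lesssim \varepsilon/(1-\varepsilon)$; letting $\varepsilon \to 0$ and using Proposition \ref{bottom Neumann} gives $\lambda_0^N(M_2)=0$.

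For the converse I would argue contrapositively. If $p$ is not amenable, then $\mathcal{G}$ has positive Cheeger constant, i.e. $|\partial_{\mathcal{G}} A| \geq h|A|$ for some $h>0$ and every finite set of sheets $A$. The uniform local geometry then upgrades this to a Neumann isoperimetric inequality on $M_2$: there is $\kappa = \kappa(h)>0$ with $\Area(\partial\Omega \cap \inte M_2) \geq \kappa\Vol(\Omega)$ for every compact domain $\Omega\subset M_2$ (one replaces $\Omega$ by the set of sheets it essentially fills and compares boundary and volume sheetwise; note that non-amenability forces every component of $M_2$ to be noncompact, so no degenerate $\Omega$ occurs). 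Given a nonzero $f\in C^\infty_c(M_2)$, which we may take nonnegative, applying the co-area formula to $f^2$ together with this isoperimetric inequality to its superlevel sets yields $\int_{M_2}\|\grad(f^2)\| \geq \kappa\int_{M_2} f^2$; since $\int\|\grad(f^2)\| = 2\int f\|\grad f\| \leq 2(\int f^2)^{1/2}(\int\|\grad f\|^2)^{1/2}$, this gives $\mathcal{R}(f) \geq \kappa^2/4$, hence $\lambda_0^N(M_2) \geq \kappa^2/4 > 0$, contradicting the hypothesis.

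The main obstacle is the two-sided comparison between the analytic problem on $M_2$ and the combinatorial one on $\mathcal{G}$, carried out with constants depending only on the compact base $M_1$: one must choose the cover $\mathcal{U}$ and the transition regions so that both the bound $\int\|\grad f\|^2 \lesssim |\partial_{\mathcal{G}} A|$ and the isoperimetric upgrade hold with uniform constants, and one must treat the boundary $\partial M_1$ --- hence $\partial M_2$ --- correctly, which is exactly where the absence of a boundary condition in Proposition \ref{bottom Neumann} (equivalently, measuring only $\partial\Omega \cap \inte M_2$) is essential. Once the comparison is uniform, the possible disconnectedness of $M_2$, with infinitely many components and sheets, requires no extra argument.
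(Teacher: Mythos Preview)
Your plan is sound and follows the classical Brooks discretization route, but the paper takes a quite different path. It treats the two implications separately and by reduction rather than by a direct graph comparison: the direction $\lambda_0^N(M_2)=0 \Rightarrow p$ amenable is simply quoted from \cite[Theorem 4.1]{Mine2}; for $p$ amenable $\Rightarrow \lambda_0^N(M_2)=0$, the paper first replaces the metric on $M_1$ by a uniformly equivalent one with a product collar near $\partial M_1$, then doubles both manifolds along their boundaries to obtain a Riemannian covering $2p \colon 2M_2 \to 2M_1$ over a \emph{closed} base, checks by a reflection argument on loops that $2p$ inherits amenability from $p$, invokes Theorem~\ref{amen thm} to get $\lambda_0(2M_2)=0$, and finally recovers $\lambda_0^N(M_2)=0$ by restricting (and, if necessary, symmetrizing across $\partial M_2$) test functions of small Rayleigh quotient to one half of the double.

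The doubling trick is short precisely because it exports all the analysis to the boundaryless case and to \cite{Mine2}; your argument is self-contained and handles both directions at once, at the cost of carrying out the manifold--graph comparison in the presence of $\partial M_2$. One remark on your converse: the step ``replace $\Omega$ by the set of sheets it essentially fills'' is not quite enough as stated, since $\Omega$ may spread over many sheets without half-filling any of them, in which case the associated vertex set is empty and the graph Cheeger inequality says nothing. The standard fix is either to add a local relative-isoperimetric term on each thinly occupied sheet, or---more cleanly---to discretize \emph{functions} rather than domains: average $f$ over each sheet against the partition of unity and compare $\mathcal{R}(f)$ directly with the combinatorial Rayleigh quotient via a uniform Neumann Poincar\'e inequality on each $U_i$.
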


\begin{proof}
The converse implication is known by \cite[Theorem 4.1]{Mine2}. For the other direction, consider a Riemannian metric on $M_{1}$ such that its boundary has a neighborhood isometric to a cylinder $\partial M_{1} \times [0,\varepsilon)$, and endow $M_{2}$ with the lifted metric. Since this metric is uniformly equivalent to the original, it suffices to show that $\lambda_{0}^{N}(M_{2}) = 0$ with respect to this metric. Denote by $2M_{i}$ the Riemannian manifold obtained by gluing two copies of $M_{i}$ along their boundaries, $i=1,2$. Then $p \colon M_{2} \to M_{1}$ extends to a Riemannian covering $2p \colon 2M_{2} \to 2M_{1}$. Choose $x \in \partial M_{1} \subset 2M_{1}$ as base point for $\pi_{1}(2M_{1})$, and observe that any loop $c$ based at $x$ is written as $c = c_{2n} \star \dots \star c_1$ for some paths (not necessarily loops, since $\partial M_1$ may be non-connected) $c_{i}$, with the image of $c_{2i-1}$ contained in $M_{1}$, and the image of $c_{2i}$ in $2M_{1} \smallsetminus M_{1}^{\circ}$, $1 \leq i \leq n$. Denote by $c_{i}^{\prime}$ the reflection of $c_{i}$ along $\partial M_{1}$, and observe that the lifts of $c_i$ and $c_i^\prime$ starting from the same point also have the same endpoint. It is now apparent that given $y \in (2p)^{-1}(x) = p^{-1}(x)$, the lifts of $c$ and $c_{2n}^\prime \star c_{2n-1} \star \dots \star c_{2}^\prime \star c_1$ starting at $y$ have the same endpoint. Since the image of the latter loop is contained in $M_1$, it is now easy to verify that $2p$ is amenable, $p$ being amenable. Since $2M_{1}$ is closed, we derive from Theorem \ref{amen thm} that $\lambda_{0}(2M_{2}) = 0$. In view of Proposition \ref{bottom}, this means that for any $\varepsilon > 0$ there exists $f \in C^{\infty}_{c}(2M_2) \smallsetminus \{0\}$ with $\mathcal{R}(f) < \varepsilon$. Without loss of generality, we may assume that $f$ is not identically zero neither on $M_2$ nor on $2M_2 \smallsetminus M_2$. Indeed, otherwise one may extend $f$ beyond $\partial M_2$ to obtain a function invariant under reflection along $\partial M_2$, with the same Rayleigh quotient and the aforementioned property. Then we readily see from Proposition \ref{bottom Neumann} that
\[
\varepsilon > \mathcal{R}(f) \geq \min\{ \mathcal{R}(f|_{M_2}) , \mathcal{R}(f|_{2M_2 \smallsetminus M_2^\circ}) \} \geq \min\{ \lambda_0^N(M_{2}) , \lambda_0^{N}(2M_2 \smallsetminus M_2^\circ) \}.
\]
Bearing in mind that $M_2$ and $2M_2 \smallsetminus M_2^\circ$ are isometric, we conclude that $\lambda_{0}^{N}(M_{2}) = 0$. \qed
\end{proof}\medskip

By virtue of the preceding theorem, we may reformulate Proposition \ref{amen domains} as follows.

\begin{corollary}\label{amen exh seq}
Let $p \colon M_{2} \to M_{1}$ be Riemannian covering, with $M_{2}$ possibly non-connected, and $(K_{n})_{n \in \mathbb{N}}$ an exhausting sequence of $M_{1}$ consisting of smoothly bounded, compact domains. Then $p$ is amenable if and only if $\lambda_{0}^{N}(p^{-1}(K_{n})) = 0$ for any $n \in \mathbb{N}$.
\end{corollary}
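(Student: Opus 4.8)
The plan is simply to stack the two characterizations of amenability that have just been proved. First I would apply Proposition \ref{amen domains} to the given exhausting sequence $(K_n)_{n \in \mathbb{N}}$: this already tells us that $p$ is amenable if and only if, for every $n$, the restricted covering $p \colon p^{-1}(K_n) \to K_n$ is amenable. So the only work left is to rephrase amenability of each of these restricted coverings in terms of the Neumann spectrum.

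For that, I would invoke Theorem \ref{Neumann thm} with $M_1$ replaced by $K_n$ and $M_2$ replaced by $p^{-1}(K_n)$. The hypotheses are met: $K_n$ is a smoothly bounded compact domain of $M_1$, hence a compact (connected) Riemannian manifold with smooth boundary; the preimage $p^{-1}(K_n)$ is a Riemannian covering space of $K_n$ with the lifted metric, possibly non-connected, which is exactly the generality allowed by Theorem \ref{Neumann thm}; and the Neumann spectrum of $p^{-1}(K_n)$ appearing in the statement is the one for this lifted metric. Thus $p \colon p^{-1}(K_n) \to K_n$ is amenable if and only if $\lambda_0^N(p^{-1}(K_n)) = 0$. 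Chaining this equivalence with the one from Proposition \ref{amen domains} over all $n$ gives both implications at once: amenability of $p$ forces every restriction to be amenable and hence every $\lambda_0^N(p^{-1}(K_n))$ to vanish, while the vanishing of all these Neumann bottoms makes every restriction amenable and hence, by Proposition \ref{amen domains}, $p$ itself amenable.

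I do not expect any real obstacle here; the statement is a formal consequence of the two preceding results, and the only thing that requires a line of care is the bookkeeping that the hypotheses of Theorem \ref{Neumann thm} transfer verbatim to each pair $(p^{-1}(K_n), K_n)$ and that the metrics involved are the restrictions/lifts of the ambient ones, so that the Neumann spectrum in the corollary coincides with the one in Theorem \ref{Neumann thm}.
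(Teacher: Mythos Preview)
Your proposal is correct and is exactly the argument the paper has in mind: the corollary is stated immediately after Theorem \ref{Neumann thm} as a direct reformulation of Proposition \ref{amen domains}, obtained by applying Theorem \ref{Neumann thm} to each restricted covering $p \colon p^{-1}(K_n) \to K_n$.
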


\section{Relatively amenable coverings}\label{Rel amen}

In this section, we introduce the notion of relatively amenable coverings and present some of their properties. Let $p \colon M_{2} \to M_{1}$ and $q \colon M_{1} \to M_{0}$ be smooth coverings, with $q$ normal. Fix a base point $x \in M_{1}$ and set $x_{0} = q(x)$. The covering $p$ is called \textit{relatively amenable with respect to} $q$, or for short, $q$-\textit{amenable} if the monodromy action of $q_{*}\pi_{1}(M_{1})$ on $(q \circ p)^{-1}(x_{0})$ is amenable.

To provide another description of this action, denote by $\Gamma$ the deck transformation group of $q$. Let $s \in \pi_{1}(M_{1})$ and  $\gamma_{s}$ a representative loop of $s$ based at $x$.
It is clear that for any $z \in (q \circ p)^{-1}(x_0)$, there exists a unique $g \in \Gamma$, such that $z \in p^{-1}(gx)$.
Then $xs$ is the endpoint of the lift of $g \circ \gamma_{s}$ starting at $z$.

For $g \in \Gamma$, consider the covering $p_{g} \colon M_{2} \to M_{1}$ defined by $p_{g} = g^{-1} \circ p$, and denote by $\hat{p} \colon \hat{M} \to M_{1}$ the induced covering
\[
\sqcup_{g \in \Gamma} p_{g} \colon \sqcup_{g \in \Gamma} M_{2} \to M_{1}.
\]
From the above discussion, we arrive at the following characterization of relatively amenable coverings.
\begin{lemma}\label{induced covering}
The covering $p$ is $q$-amenable if and only if the induced covering $\hat{p}$ is amenable.
\end{lemma}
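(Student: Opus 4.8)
The plan is to unravel both conditions to the level of $\Gamma$-sets and invariant means, and then observe that they describe the same object. On the one hand, $\hat p$ is amenable precisely when its monodromy action of $\pi_1(M_1)$ on $\hat p^{-1}(x) = \sqcup_{g \in \Gamma} p_g^{-1}(x)$ is amenable. On the other hand, $p$ is $q$-amenable precisely when the monodromy action of $q_*\pi_1(M_1)$ on $(q\circ p)^{-1}(x_0)$ is amenable. So the first step is to set up an explicit $\pi_1(M_1)$-equivariant identification between the fiber $\hat p^{-1}(x)$ with its monodromy action, and the fiber $(q\circ p)^{-1}(x_0)$ with the action described just before the statement (where $s \in \pi_1(M_1)$ sends $z \in p^{-1}(gx)$ to the endpoint of the lift of $g\circ\gamma_s$ starting at $z$).

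Concretely, I would define a map $\Phi \colon \hat p^{-1}(x) = \sqcup_{g\in\Gamma} p^{-1}(x) \to (q\circ p)^{-1}(x_0)$ by sending the point $y$ in the $g$-th copy (so $p_g(y) = g^{-1}p(y) = x$, i.e. $p(y) = gx$) to $y$ itself, now viewed as a point of $M_2$ lying over $x_0 = q(p_g(y)\cdot\text{(stuff)})$ — more carefully, over $q(p(y)) = q(gx) = q(x) = x_0$. This $\Phi$ is a bijection because every point of $(q\circ p)^{-1}(x_0)$ lies over $x_0$, hence its image under $p$ lies in $q^{-1}(x_0) = \Gamma x$, so it lies in $p^{-1}(gx)$ for a unique $g \in \Gamma$, which tells you which copy of $M_2$ it came from. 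The second step is to check that $\Phi$ intertwines the two actions: the monodromy action of $s \in \pi_1(M_1)$ on $\hat M$ lifts $\gamma_s$ through $\hat p$, which on the $g$-th sheet means lifting $\gamma_s$ through $p_g = g^{-1}\circ p$, equivalently lifting $g\circ\gamma_s$ through $p$; and this is exactly the recipe for the $q_*\pi_1(M_1)$-action on $(q\circ p)^{-1}(x_0)$ spelled out before the lemma. One must also confirm that the relevant acting groups match up: $\pi_1(M_1)$ acts on $\hat p^{-1}(x)$ through its image, and the image of $s$ acting on the fiber depends only on $q_*s$ because the recipe only uses $g \in \Gamma \cong \pi_1(M_0)/q_*\pi_1(M_1)$-cosets together with the loop $\gamma_s$ up to the relevant equivalence — here I'd lean on normality of $q$ so that $\Gamma$ is a genuine quotient group and the bookkeeping is clean.

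Once $\Phi$ is established as an isomorphism of $\pi_1(M_1)$-sets (where $\pi_1(M_1)$ acts on the target through $q_*$), the conclusion is immediate: an invariant mean on $\ell^\infty(\hat p^{-1}(x))$ exists if and only if one exists on $\ell^\infty((q\circ p)^{-1}(x_0))$, since $\Phi$ transports means to means and commutes with the action. Hence $\hat p$ is amenable iff $p$ is $q$-amenable.

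The main obstacle I anticipate is purely notational: keeping straight which group acts, on which fiber, via which recipe, and verifying that the action on $(q\circ p)^{-1}(x_0)$ genuinely factors through $q_*\pi_1(M_1)$ rather than all of $\pi_1(M_0)$ — in other words, that the "relative" monodromy action in the definition is well defined and agrees with the monodromy of $\hat p$ sheet by sheet. Normality of $q$ is what makes $\Gamma$ a group and makes the assignment $z \mapsto (g, \text{endpoint})$ behave correctly under composition of loops; I'd want to write out the composition law $\Phi(z\cdot(st)) = \Phi(z)\cdot s \cdot t$ carefully once, as everything else is formal.
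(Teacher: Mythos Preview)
Your proposal is correct and follows exactly the approach of the paper, which simply asserts in one line that the monodromy action of $\hat{p}$ coincides with the monodromy action of $q_{*}\pi_{1}(M_{1})$ on $(q \circ p)^{-1}(x_{0})$; you have spelled out the bijection $\Phi$ and the equivariance check that the paper leaves implicit. One minor remark: your worry about the action ``factoring through $q_*\pi_1(M_1)$'' is a non-issue, since $q_*$ is injective on fundamental groups (being induced by a covering), so $\pi_1(M_1)$ and $q_*\pi_1(M_1)$ are canonically isomorphic and there is nothing to check there.
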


\begin{proof}
It is easily checked that the monodromy action of $\hat{p}$ coincides with the monodromy action of $q_{*}\pi_{1}(M_{1})$ on $(q \circ p)^{-1}(x_{0})$. \qed
\end{proof}\medskip

In view of the preceding lemma, it is evident that amenable coverings are relatively amenable. Furthermore, we readily see that if $q$ is finite sheeted, then $p$ is $q$-amenable if and only if it is amenable. We now discuss an analogue of Corollary \ref{amen exh seq}.

\begin{proposition}\label{relative amenability domains}
Let $p \colon M_{2} \to M_{1}$ and $q \colon M_{1} \to M_{0}$ be Riemannian coverings, where $q$ is normal with deck transformation group $\Gamma$, and fix an exhausting sequence $(K_{n})_{n \in \mathbb{N}}$ of $M_{1}$ consisting of smoothly bounded, compact domains. Then $p$ is $q$-amenable if and only if
\[
\inf_{g \in \Gamma}\lambda_{0}^{N}(p^{-1}(g K_{n})) = 0
\]
for any $n \in \mathbb{N}$.
\end{proposition}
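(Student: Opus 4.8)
The plan is to reduce this statement to Corollary \ref{amen exh seq} applied to the induced covering $\hat{p} \colon \hat{M} \to M_{1}$ introduced before Lemma \ref{induced covering}, and then to identify the Neumann spectrum of $\hat{p}^{-1}(K_{n})$ in terms of the family $\{p^{-1}(gK_{n})\}_{g \in \Gamma}$. By Lemma \ref{induced covering}, $p$ is $q$-amenable if and only if $\hat{p}$ is amenable. Since $(K_{n})_{n}$ is an exhausting sequence of $M_{1}$ by smoothly bounded compact domains, Corollary \ref{amen exh seq} tells us that $\hat{p}$ is amenable if and only if $\lambda_{0}^{N}(\hat{p}^{-1}(K_{n})) = 0$ for every $n$. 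So everything comes down to computing $\lambda_{0}^{N}(\hat{p}^{-1}(K_{n}))$.

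First I would unwind the definition of $\hat{p}$. We have $\hat{M} = \sqcup_{g \in \Gamma} M_{2}$ with $\hat{p}$ restricting to $p_{g} = g^{-1} \circ p$ on the $g$-th copy. Hence $\hat{p}^{-1}(K_{n})$ is the disjoint union over $g \in \Gamma$ of $p_{g}^{-1}(K_{n}) = p^{-1}(g K_{n})$, each carrying the metric lifted from $M_{1}$ via $p_{g}$; note that $p_{g} = g^{-1}\circ p$ and $g$ is an isometry of $M_{1}$, so the lifted metric on $p^{-1}(gK_n)$ agrees with the one it inherits as a subset of $M_2$ with the $p$-lifted metric, and the covering $p \colon p^{-1}(gK_n) \to gK_n$ is Riemannian. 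The key elementary fact I would invoke is that the bottom of the Neumann spectrum of a (countable) disjoint union of Riemannian manifolds with smooth boundary equals the infimum of the bottoms of the pieces: indeed, by Proposition \ref{bottom Neumann}, $\lambda_{0}^{N}$ of each piece and of the union is an infimum of Rayleigh quotients over compactly supported smooth functions, and a compactly supported function on a disjoint union is supported in finitely many components, so $\lambda_{0}^{N}(\sqcup_{g} N_{g}) = \inf_{g} \lambda_{0}^{N}(N_{g})$. Applying this with $N_{g} = p^{-1}(gK_{n})$ gives
\[
\lambda_{0}^{N}(\hat{p}^{-1}(K_{n})) = \inf_{g \in \Gamma} \lambda_{0}^{N}(p^{-1}(gK_{n})).
\]
Combining the three displayed equivalences yields: $p$ is $q$-amenable $\iff$ $\hat p$ amenable $\iff$ $\lambda_{0}^{N}(\hat p^{-1}(K_n))=0$ for all $n$ $\iff$ $\inf_{g\in\Gamma}\lambda_{0}^{N}(p^{-1}(gK_n))=0$ for all $n$, which is the claim.

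The main point to be careful about — rather than a deep obstacle — is the disjoint-union computation of $\lambda_0^N$ together with the matching of metrics: one must check that $p^{-1}(gK_n)$, viewed inside $\hat M$ via $p_g$, is genuinely isometric to $p^{-1}(gK_n) \subset M_2$ with its usual lifted metric, so that the notation $\lambda_0^N(p^{-1}(gK_n))$ in the statement is unambiguous; this is immediate because $g$ acts by isometries on $M_1$. A secondary subtlety is that $p^{-1}(gK_n)$ need not be connected, but Proposition \ref{bottom Neumann} and Corollary \ref{amen exh seq} are already stated for possibly non-connected manifolds, so no extra care is needed there. One should also note that $\{gK_n : g \in \Gamma\}$, while not literally the same as $\{K_n\}$, still consists of smoothly bounded compact domains of $M_1$, which is all that is used. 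I would therefore present the argument as: (1) reduce to $\hat p$ via Lemma \ref{induced covering}; (2) apply Corollary \ref{amen exh seq} to $\hat p$; (3) identify $\hat p^{-1}(K_n)$ as the disjoint union $\sqcup_{g} p^{-1}(gK_n)$; (4) use the disjoint-union formula for $\lambda_0^N$ from Proposition \ref{bottom Neumann}; conclude.
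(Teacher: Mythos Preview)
Your proposal is correct and follows essentially the same approach as the paper: identify $\hat{p}^{-1}(K_n)$ with the disjoint union $\sqcup_{g\in\Gamma} p^{-1}(gK_n)$, deduce $\lambda_0^N(\hat{p}^{-1}(K_n)) = \inf_{g\in\Gamma}\lambda_0^N(p^{-1}(gK_n))$, and conclude via Lemma~\ref{induced covering} and Corollary~\ref{amen exh seq}. The paper's proof is just a terser version of your argument, stating the isometric identification and the Neumann-spectrum formula for the disjoint union without the extra justification you supply.
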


\begin{proof}
It is obvious that $\hat{p}^{-1}(K)$ is isometric to the disjoint union of $p^{-1}(gK)$ with $g \in \Gamma$, for any smoothly bounded, compact domain $K$ of $M_{1}$. In particular, the bottoms of the Neumann spectra are related by
\[
\lambda_{0}^{N}(\hat{p}^{-1}(K)) = \inf_{g \in \Gamma} \lambda_{0}^{N}(p^{-1}(gK)).
\]
The proof is completed by Corollary \ref{amen exh seq} and Lemma \ref{induced covering}. \qed
\end{proof}

\begin{corollary}\label{normal composition}
Let $p \colon M_{2} \to M_{1}$ and $q \colon M_{1} \to M_{0}$ be smooth coverings, with $q$ and $q \circ p$ normal. Then $p$ is $q$-amenable if and only if it is amenable.
\end{corollary}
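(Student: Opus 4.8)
The plan is to derive the statement directly from Proposition \ref{relative amenability domains} and Corollary \ref{amen exh seq}. The idea is that when $q\circ p$ is normal, the domains $p^{-1}(gK)$, with $g$ ranging over the deck transformation group $\Gamma$ of $q$, are all isometric to one another, so that the infimum over $\Gamma$ occurring in Proposition \ref{relative amenability domains} reduces to the single quantity appearing in Corollary \ref{amen exh seq}. Since amenability and $q$-amenability of a covering are defined purely through monodromy actions, they do not depend on any choice of Riemannian metric; hence I may fix an arbitrary Riemannian metric on $M_0$, lift it via $q$ and via $q\circ p$, and assume that $p$, $q$ and $q\circ p$ are Riemannian coverings. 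I also fix an exhausting sequence $(K_n)_{n\in\mathbb N}$ of $M_1$ by smoothly bounded, compact domains.

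The key step is to show that for each $g\in\Gamma$ there is a deck transformation $\phi$ of $q\circ p$ with $p\circ\phi=g\circ p$. To this end I fix a base point $z_0\in M_2$, set $x=p(z_0)$, and regard $g\circ p\colon M_2\to M_1$, which sends $z_0$ to $gx\in q^{-1}(q(x))$. Because $q$ and $q\circ p$ are normal, the image $q_*\pi_1(M_1,y)\le\pi_1(M_0,q(x))$ is the same for every $y\in q^{-1}(q(x))$, and the image $(q\circ p)_*\pi_1(M_2,z)\le\pi_1(M_0,q(x))$ is the same for every $z$ in the fibre $(q\circ p)^{-1}(q(x))$; using this, the injectivity of $q_*$ on $\pi_1(M_1,gx)$, and the identity $q\circ g=q$, one checks that $(g\circ p)_*\pi_1(M_2,z_0)$ and $p_*\pi_1(M_2,z_1)$ coincide as subgroups of $\pi_1(M_1,gx)$, for any $z_1\in p^{-1}(gx)$. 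The lifting criterion then provides a lift $\phi\colon M_2\to M_2$ of $g\circ p$ along $p$, and $q\circ p\circ\phi=q\circ g\circ p=q\circ p$ shows that $\phi$ is a deck transformation of $q\circ p$, hence an isometry of $M_2$. From $p\circ\phi=g\circ p$ one gets $\phi(p^{-1}(K))=p^{-1}(gK)$ for every compact $K\subseteq M_1$, and therefore $\lambda_0^N(p^{-1}(gK_n))=\lambda_0^N(p^{-1}(K_n))$ for all $g\in\Gamma$ and all $n$.

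With this in hand the proof finishes quickly: by Proposition \ref{relative amenability domains}, $p$ is $q$-amenable if and only if $\inf_{g\in\Gamma}\lambda_0^N(p^{-1}(gK_n))=0$ for every $n$; by the previous step this infimum equals $\lambda_0^N(p^{-1}(K_n))$, so the condition reads $\lambda_0^N(p^{-1}(K_n))=0$ for every $n$, which by Corollary \ref{amen exh seq} means precisely that $p$ is amenable. I expect the only real obstacle to be the covering-space fact of the second paragraph: this is exactly where the normality of $q\circ p$ is indispensable, and one has to keep track of base points carefully, whereas the rest is a formal substitution into results already at our disposal. A purely combinatorial argument is also possible: identifying the fibres with coset spaces, $q$-amenability of $p$ becomes amenability of the right action of $N:=q_*\pi_1(M_1)$ on $\pi_1(M_0)/(q\circ p)_*\pi_1(M_2)$, whose orbits are each isomorphic, as $N$-sets, to the regular action of the group $N/(q\circ p)_*\pi_1(M_2)$ on itself; and such a disjoint union of copies of a single fixed transitive action carries an invariant mean if and only if one copy does.
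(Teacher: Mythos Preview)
Your proof is correct and follows essentially the same route as the paper: fix a metric and an exhaustion, use normality of $q\circ p$ together with the lifting criterion to show that every deck transformation $g$ of $q$ lifts to an isometry of $M_2$, conclude that $p^{-1}(gK_n)$ is isometric to $p^{-1}(K_n)$, and then combine Proposition~\ref{relative amenability domains} with Corollary~\ref{amen exh seq}. The purely group-theoretic argument you sketch at the end is a nice alternative not present in the paper.
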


\begin{proof}
It is clear that if $p$ is amenable, then it is $q$-amenable. For the converse implication, endow $M_{0}$ with a Riemannian metric and $M_{1}$, $M_{2}$ with the lifted metrics. Denote by $\Gamma$ the deck transformation group of $q$ and let $(K_{n})_{n \in \mathbb{N}}$ be an exhausting sequence of $M_{1}$ consisting of smoothly bounded, compact domains. Then Proposition \ref{relative amenability domains} states that
\[
\inf_{g \in \Gamma}\lambda_{0}^{N}(p^{-1}(g K_{n}))  = 0
\]
for any $n \in \mathbb{N}$. Since $q \circ p$ is normal, we deduce that any $g \in \Gamma$ can be lifted to an isometry of $M_{2}$. Indeed, given $x \in M_2$, write $x_0 = (q \circ p)(x)$ and observe that $(q_* \circ g_* \circ p_*)(\pi_1(M_2,x)) = (q_* \circ p_*)(\pi_1(M_2,x))$. Hence, obtain that $(q_* \circ g_* \circ p_*)(\pi_1(M_2,x)) = (q_* \circ p_*)(\pi_1 (M_2,y))$ for any $y \in p^{-1}(gp(x))$, $q \circ p$ being normal. Keeping in mind that $q_* \colon \pi_{1}(M_1,gp(x)) \to \pi_1(M_0,x_0)$ is injective, this implies that $(g_* \circ p_*)(\pi_1(M_2,x)) = p_*(\pi_1 (M_2,y))$. It now follows from the lifting theorem that $g$ can be lifted to a local isometry of $M_2$ mapping $x$ to $y$. Since $x \in M_2$, $g \in \Gamma$ and $y \in p^{-1}(gp(x))$ are arbitrary, we readily see that the lift is invertible, and hence, an isometry. In particular, we derive that  $p^{-1}(g K_{n})$ is isometric to $p^{-1}(K_{n})$ for any $g \in \Gamma$ and $n \in \mathbb{N}$, which yields that
\[
\lambda_{0}^{N}(p^{-1}(K_{n})) = \inf_{g \in \Gamma}\lambda_{0}^{N}(p^{-1}(g K_{n}))  = 0
\]
for any $n \in \mathbb{N}$. We conclude from Corollary \ref{amen exh seq} that $p$ is amenable. \qed
\end{proof}

\section{Spectrum under relatively amenable coverings}\label{Spectrum}

In this section we study the behavior of the bottom of the spectrum under relatively amenable coverings, and give some applications and examples. \medskip

\noindent\emph{Proof of Theorem \ref{main}:} 
Suppose first that $p$ is $q$-amenable. Then the induced covering $\hat{p}$ is amenable, from Lemma \ref{induced covering}. Denoting by $\hat{S}$ the lift of $S_{1}$ on $\hat{M}$, we derive from Theorem \ref{amen thm} that $\lambda_0(\hat{S}) = \lambda_0(S_1)$. Moreover, bearing in mind that $S_{1}$ is the lift of $S_{0}$, we readily see that $S_1$ is invariant under deck transformations of $q$. Therefore, any connected component of $\hat{M}$ is isometric to $M_{2}$ via an isometry that identifies $\hat{S}$ with $S_{2}$. This yields that $\lambda_0(\hat{S}) = \lambda_0(S_2)$, which establishes the asserted equality.

To prove the second assertion, notice that the assumption that $\lambda_{0}^{\ess}(S_{0}) > \lambda_{0}(S_{1})$, together with Proposition \ref{bottom essential}, implies that there exists a compact domain $D_{0}$ of $M_{0}$ such that
\[
\lambda_{0}(S_{0}, M_{0} \smallsetminus D_{0}) > \lambda_{0}(S_{1}).
\]
Let $(K_{m})_{m \in \mathbb{N}}$ be an exhausting sequence of $M_{1}$ consisting of smoothly bounded, compact domains, such that $D_{0}$ is contained in the interior of $q(K_{1})$. 

Assume to the contrary that $p$ is not $q$-amenable, and denote by $\Gamma$ the deck transformation group of $q$. By virtue of Proposition \ref{relative amenability domains}, there exists $m \in \mathbb{N}$ and $c > 0$, such that
\[
\lambda_{0}^{N}(p^{-1}(g K_{m})) \geq c
\]
for any $g \in \Gamma$. Set $K = K_{m}$ and $D = q(K_{m})$.

We know from Proposition \ref{positive spectrum} that there exists a positive $\varphi_{1} \in C^{\infty}(M_{1})$ satisfying $S_{1}\varphi_{1} = \lambda_{0}(S_{1}) \varphi_{1}$. Denote by $\varphi_{2}$ the lift of $\varphi_{1}$ on $M_{2}$ and by $L$ the renormalization of $S_{2}$ with respect to $\varphi_{2}$. Since $\lambda_{0}(S_{2}) = \lambda_{0}(S_{1})$, we derive from Proposition \ref{renormalization} that there exists $(f_{n})_{n \in \mathbb{N}} \subset C^{\infty}_{c}(M_{2})$ with $\| f_{n} \|_{L^{2}_{\varphi_{2}}(M_{2})} = 1$ and $\mathcal{R}_{L}(f_{n}) \rightarrow 0$.

According to the gradient estimate \cite[Theorem 6]{Cheng-Yau}, there exists $C>0$ such that
\[
\max_{K} \psi \leq C \min_{K} \psi
\]
for any positive $\psi \in C^{\infty}(M_{1})$ with $S_{1} \psi = \lambda_{0}(S_{1}) \psi$. For $\psi = \varphi_{1} \circ g$, this means that
\[
\max_{p^{-1}(gK)} \varphi_{2} = \max_{gK} \varphi_{1} \leq C \min_{gK} \varphi_{1} = C \min_{p^{-1}(gK)} \varphi_{2}
\]
for any $g \in \Gamma$. Using this and Proposition \ref{bottom Neumann}, we compute
\begin{eqnarray}\label{harnack}
\int_{p^{-1}(gK)} \| \grad f_{n} \|^{2} \varphi_{2}^{2} &\geq& \big( \min_{p^{-1}(gK)} \varphi_{2}^{2} \big) \int_{p^{-1}(gK)} \| \grad f_{n} \|^{2} \nonumber \\ 
&\geq& \frac{1}{C^{2}} \big( \max_{p^{-1}(gK)} \varphi_{2}^{2} \big) \lambda_{0}^{N}(p^{-1}(gK)) \int_{p^{-1}(gK)} f_{n}^{2} \nonumber \\
&\geq& \frac{c}{C^{2}} \int_{p^{-1}(gK)} f_{n}^{2} \varphi_{2}^{2}
\end{eqnarray}
for any $n \in \mathbb{N}$ and $g \in \Gamma$.

Since $K$ is compact, it is clear that there exists $k \in \mathbb{N}$ such that any point of $M_{1}$ belongs to at most $k$ different translates $gK$ of $K$, with $g \in \Gamma$. Thus, any point of $M_{2}$ belongs to at most $k$ different $p^{-1}(gK)$, with $g \in \Gamma$. This, together with (\ref{harnack}), gives the estimate
\[
\int_{(q \circ p)^{-1}(D)} \| \grad f_{n} \|^{2} \varphi_{2}^{2} \geq \frac{1}{k} \sum_{g \in \Gamma} \int_{p^{-1}(gK)} \| \grad f_{n} \|^{2} \varphi_{2}^{2} \geq \frac{c}{kC^{2}} \int_{(q \circ p)^{-1}(D)} f_{n}^{2} \varphi_{2}^{2},
\]
where we used that $(q \circ p)^{-1}(D)$ is the union of $p^{-1}(gK)$ with $g \in \Gamma$.
Bearing in mind that $\| f_{n} \|_{L^{2}_{\varphi_{2}}(M_{2})} = 1$ and $\mathcal{R}_{L}(f_{n}) \rightarrow 0$, this yields that
\begin{equation}\label{convergence}
\int_{(q \circ p)^{-1}(D)} f_{n}^{2} \varphi_{2}^{2} \rightarrow 0 \text{ and } \int_{M_{2} \smallsetminus (q \circ p)^{-1}(D)} f_{n}^{2} \varphi_{2}^{2} \rightarrow 1.
\end{equation}

Let $\chi_{0} \in C^{\infty}_{c}(M_{0})$ with $\chi_{0} = 1$ in a neighborhood of $D_{0}$ and $\supp \chi_{0}$ contained in the interior of $D$. Denote by $\chi_{2}$ the lift of $\chi_{0}$ on $M_{2}$, and set $h_{n} = (1 - \chi_{2})f_{n} \in C^{\infty}_{c}(M_{2})$. In view of (\ref{convergence}), it is immediate to verify that
\[
\int_{M_{2}} h_{n}^{2}\varphi_{2}^{2} \rightarrow 1 \text{ and } \int_{M_{2}} \| \grad h_{n} \|^{2} \varphi_{2}^{2} \rightarrow 0,
\]
which shows that that $\mathcal{R}_{L}(h_{n}) \rightarrow 0$, and thus, $\mathcal{R}_{S_{2}}(h_{n} \varphi_{2}) \rightarrow \lambda_{0}(S_{1})$, by Proposition \ref{renormalization}. On the other hand, we have that $h_{n} \varphi_{2}$ is compactly supported in $M_{2} \smallsetminus (q \circ p)^{-1}(D_{0})$. By Proposition \ref{bottom}, this implies that
\[
\mathcal{R}_{S_{2}}(h_{n}\varphi_{2}) \geq \lambda_{0}(S_{2} , M_{2} \smallsetminus (q \circ p)^{-1}(D_{0})) \geq \lambda_{0}(S_{0} , M_{0} \smallsetminus D_{0}) > \lambda_{0}(S_{1}),
\]
where the intermediate inequality follows from (\ref{general inequality}) applied to the restriction of $q \circ p$ over any connected component of $M_{0} \smallsetminus D_{0}$. This is a contradiction, which completes the proof. \qed \medskip

The next example illustrates that in Theorem \ref{main}(ii), the covering $p$ is $q$-amenable, but not necessarily amenable, and in particular, that relative amenability is indeed a weaker property than amenability.
This example also demonstrates that in Corollary \ref{normal composition}, the assumption that $q \circ p$ is normal cannot be replaced with $p$ being normal.

\begin{example}
Let $N$ be a closed Riemannian manifold of dimension $n \geq 3$ with non-amenable fundamental group. Fix two sufficiently small open balls $B_{i}$ with disjoint closures and $\partial B_{i}$ diffeomorphic to the sphere $S^{n-1}$, $i=1,2$. Denote by $M_{0}$ the closed manifold obtained by gluing a cylinder $S^{n-1} \times [0,1]$ along the boundary of $N \smallsetminus (B_{1} \cup B_{2})$, so that $S^{n-1} \times \{0\}$ gets identified with $\partial B_{1}$, and $S^{n-1} \times \{1\}$ with $\partial B_{2}$. Endow $M_{0}$ with a Riemannian metric.

Consider now the disjoint union of copies $N_{k}$ of $N \smallsetminus (B_{1} \cup B_{2})$, with $k \in \mathbb{Z}$. For each $k \in \mathbb{Z}$, glue a cylinder $S^{n-1} \times [0,1]$ along the boundary of this disjoint union, so that $S^{n-1} \times \{0\}$ gets identified with $\partial B_{1}$ in $N_{k}$, and $S^{n-1} \times \{1\}$ with $\partial B_{2}$ in $N_{k+1}$. In this way, we obtain a manifold $M_{1}$ on which $\mathbb{Z}$ acts via diffeomorphsms and the quotient is diffeomorphic to $M_{0}$. That is, we have a covering $q \colon M_{1} \to M_{0}$ with deck transformation group $\mathbb{Z}$. We endow $M_{1}$ with the lift of the Riemannian metric of $M_{0}$.

Let $F$ be a fundamental domain of $q$ which is diffeomorphic to $N_{0}$ with two cylinders $S^{n-1} \times [0,1/2]$ attached along its boundary. Then $\pi_{1}(F)$ is non-amenable and $\partial F$ has two connected components $C_{1}$ and $C_{2}$, which are diffeomorphic to $S^{n-1}$. Hence, the universal covering $p \colon \tilde{F} \to F$ of $F$ is non-amenable. It should be noticed that the restriction of $p$ on any connected component of $\partial \tilde{F}$ is a covering over a connected component of $\partial F$. Since the connected components of $\partial F$ are diffeomorphic to $S^{n-1}$, where $n \geq 3$, we derive that $p$ restricted to any connected component of $\partial \tilde{F}$ is an isometry. This means that $p^{-1}(C_{i})$ is a disjoint union of copies of $C_{i}$, $i=1,2$. Write $M_{1} \smallsetminus F^{\circ} = D_{1} \sqcup D_{2}$, with $C_{i}$ contained in $D_{i}$, $i=1,2$. Denote by $M_{2}$ the manifold obtained by gluing a copy of $D_{i}$ to $\tilde{F}$ along any connected component of $p^{-1}(C_{i})$, $i=1,2$. We readily see that the covering $p \colon \tilde{F} \to F$ is extended to a normal covering $p \colon M_{2} \to M_{1}$ with the same deck transformation group. Therefore, $p \colon M_{2} \to M_{1}$ is non-amenable.

It remains to establish that $p \colon M_{2} \to M_{1}$ is $q$-amenable. To this end, we endow $M_{2}$ with the lift of the Riemannian metric of $M_{1}$, and by virtue of Theorem \ref{main}, it suffices to prove that $\lambda_{0}(M_{2}) = 0$. Since $q$ is amenable, Theorem \ref{amen thm} states that $\lambda_{0}(M_{1}) = 0$. From Proposition \ref{bottom}, for any $\varepsilon > 0$ there exists a non-zero $f \in C^{\infty}_{c}(M_{1})$ with $\mathcal{R}(f) < \varepsilon$. Then there exists a deck transformation $g$ of $q$, such that $\supp (f \circ g)$ is contained in the interior of $D_{1}$,  $\supp f$ being compact. It is evident that $\mathcal{R}(f \circ g) < \varepsilon$, and since $D_{1}$ is isometric to a domain of $M_{2}$, the corresponding $f_{2} \in C^{\infty}_{c}(M_{2})$ satisfies $\mathcal{R}(f_{2}) < \varepsilon$. Since $\varepsilon > 0$ is arbitrary, we conclude from Proposition \ref{bottom} that $\lambda_{0}(M_{2}) = 0$, as we wished.
\end{example}

Based on the preceding example, we now present examples of amenable smooth coverings $p \colon M_{2} \to M_{1}$ with $M_{2}$ non-connected, such that the restriction of $p$ on any connected component of $M_{2}$ is non-amenable.

\begin{example}
Let $p \colon  M_{2} \to M_{1}$ and $q \colon M_{1} \to M_{0}$ be smooth coverings, where $q$ is normal with deck transformation group $\Gamma$. Suppose that $p$ is $q$-amenable, but not amenable. Then the induced covering $\hat{p} \colon \hat{M} \to M_{1}$ is amenable, from Lemma \ref{induced covering}. However, the restriction of $\hat{p}$ on any connected component of $\hat{M}$ is of the form $p_{g} = g^{-1} \circ p \colon M_{2} \to M_{1}$ for some $g \in \Gamma$. It follows from Corollary \ref{amen exh seq} that $p_{g}$ is non-amenable for any $g \in \Gamma$, $p$ being non-amenable.
\end{example}

We end this section with a characterization of the amenability of a composition of coverings.

\begin{corollary}\label{normal}
Let $p \colon M_{2} \to M_{1}$ and $q \colon M_{1} \to M_{0}$ be smooth coverings, with $q$ normal. Then $q \circ p$ is amenable if and only if $q$ is amenable and $p$ is $q$-amenable.
\end{corollary}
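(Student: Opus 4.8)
The plan is to restate all three properties in terms of invariant means and then settle the equivalence by a standard ``extension of means'' construction. Fix a base point $x \in M_{1}$, put $x_{0} = q(x)$, and write $G = \pi_{1}(M_{0}, x_{0})$ and $N = q_{*}\pi_{1}(M_{1}, x)$; since $q$ is a covering, $q_{*}$ is injective, so $N$ is a subgroup of $G$, and since $q$ is normal, $N$ is normal in $G$. Let $Y = (q \circ p)^{-1}(x_{0})$, carrying the monodromy action of $G$. I would first record two elementary facts. First, restricting $p$ yields a map $\pi = p|_{Y} \colon Y \to q^{-1}(x_{0})$ which is $G$-equivariant -- lifting a loop of $M_{0}$ through $q \circ p$ and then projecting by $p$ produces its lift through $q$ -- and surjective, since $p$ is surjective. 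Second, the monodromy action of $q_{*}\pi_{1}(M_{1})$ on $Y$ that appears in the definition of $q$-amenability is precisely the restriction to $N \leq G$ of the monodromy action of $G$ on $Y$; this is verified by the same path-lifting bookkeeping as in the paragraph preceding Lemma \ref{induced covering}. Consequently: $q \circ p$ is amenable $\iff$ $\ell^{\infty}(Y)$ admits a $G$-invariant mean; $q$ is amenable $\iff$ $\ell^{\infty}(q^{-1}(x_{0}))$ admits a $G$-invariant mean; and $p$ is $q$-amenable $\iff$ $\ell^{\infty}(Y)$ admits an $N$-invariant mean.

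For the forward implication, suppose $q \circ p$ is amenable and let $\mu$ be a $G$-invariant mean on $\ell^{\infty}(Y)$. Restricting the invariance to $N \leq G$ shows $\mu$ is an $N$-invariant mean, so $p$ is $q$-amenable. Pushing $\mu$ forward along $\pi$, that is, setting $\mu'(f) = \mu(f \circ \pi)$ for $f \in \ell^{\infty}(q^{-1}(x_{0}))$, gives a mean (since $\pi$ is surjective) which is $G$-invariant (since $g^{*}(f \circ \pi) = (g^{*}f) \circ \pi$ by equivariance of $\pi$); hence $q$ is amenable.

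For the substantive implication, suppose $q$ is amenable and $p$ is $q$-amenable, and choose an $N$-invariant mean $\mu_{N}$ on $\ell^{\infty}(Y)$ together with a $G$-invariant mean $\nu$ on $\ell^{\infty}(q^{-1}(x_{0}))$. Using that the monodromy action on $q^{-1}(x_{0})$ is transitive with point stabilizer $N$, identify $q^{-1}(x_{0})$ with the right coset space $N \backslash G$ under right translation. For $f \in \ell^{\infty}(Y)$ define $F_{f} \in \ell^{\infty}(G)$ by $F_{f}(g) = \mu_{N}(g^{*}f)$, where $(g^{*}f)(y) = f(yg)$. The identity $(hg)^{*}f = h^{*}(g^{*}f)$ together with $N$-invariance of $\mu_{N}$ shows that $F_{f}$ is constant on left cosets $Ng$, so it descends to $\bar{F}_{f} \in \ell^{\infty}(N \backslash G)$; set $\mu(f) = \nu(\bar{F}_{f})$. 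That $\mu$ is a mean is immediate from $\inf f \leq \mu_{N}(g^{*}f) \leq \sup f$. For $G$-invariance, the identity $(gg_{0})^{*}f = g^{*}(g_{0}^{*}f)$ gives $F_{g_{0}^{*}f}(g) = F_{f}(gg_{0})$, so $F_{g_{0}^{*}f}$ is the right translate of $F_{f}$ by $g_{0}$; this descends to the right-translation action on $N \backslash G$, which $\nu$ leaves invariant, whence $\mu(g_{0}^{*}f) = \mu(f)$. Thus $\ell^{\infty}(Y)$ carries a $G$-invariant mean and $q \circ p$ is amenable.

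The main obstacle is not conceptual but bookkeeping: in the reverse direction one must keep the left/right conventions consistent so that $F_{f}$ descends along $N \backslash G$ (and not $G/N$) and so that the residual $G$-action on $\ell^{\infty}(N \backslash G)$ is exactly the one annihilated by $\nu$; and before that, the second fact above -- that $q$-amenability is literally the restriction of the $q \circ p$-monodromy to $N$ -- must be checked carefully enough that the identification of the two actions is genuine rather than merely plausible.
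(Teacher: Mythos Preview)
Your argument is correct and is the classical ``extension of invariant means'' proof, adapted from groups to group actions. The forward direction is immediate as you note; in the reverse direction your averaging construction $\mu(f)=\nu(\bar F_f)$ with $F_f(g)=\mu_N(g^*f)$ is exactly the standard device for showing that amenability passes to extensions, and your bookkeeping with the left/right conventions is consistent: $N$-invariance of $\mu_N$ kills the left $N$-ambiguity so $F_f$ descends to $N\backslash G$, while the residual right $G$-action is precisely the monodromy of $q$, hence annihilated by $\nu$. The identification of $q$-amenability with the restriction of the $(q\circ p)$-monodromy to $N$ is indeed tautological once one unwinds the definitions, so your second ``elementary fact'' is fine.

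This is, however, a genuinely different route from the paper's. The paper does \emph{not} argue directly with means; instead it bootstraps the spectral theory it has just developed. It puts a Riemannian metric on $M_0$, chooses a Schr\"odinger operator $S_0$ with discrete spectrum (so that $\lambda_0^{\ess}(S_0)=+\infty$), and then reads off each implication from Theorem~\ref{main} and the known converse \cite[Theorem~1.2]{Mine2}: amenability of $q\circ p$ forces $\lambda_0(S_2)=\lambda_0(S_1)=\lambda_0(S_0)$, whence $q$ is amenable and $p$ is $q$-amenable; conversely those two hypotheses force $\lambda_0(S_2)=\lambda_0(S_0)$, whence $q\circ p$ is amenable. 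Your proof is more elementary and entirely self-contained---it uses nothing beyond the definition of an invariant mean and would work even if one had never heard of a Laplacian---whereas the paper's proof is deliberately indirect, serving to illustrate that the spectral characterisation of Theorem~\ref{main} is sharp enough to recover this purely combinatorial fact as a corollary.
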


\begin{proof}
Endow $M_{0}$ with a Riemannian metric and $M_{1}$, $M_{2}$ with the lifted metrics. Let $S_{0}$ be a Schr\"{o}dinger operator on $M_{0}$ with discrete spectrum (for instance, $S_{0} = \Delta + V$, where $V$ has compact sublevels), and denote by $S_{1}$, $S_{2}$ its lift on $M_{1}$, $M_{2}$, respectively.

If $q \circ p$ is amenable, then $\lambda_{0}(S_{0}) = \lambda_{0}(S_{2})$, from Theorem \ref{amen thm}. 
Furthermore, we know from (\ref{general inequality}) that $\lambda_{0}(S_{0}) \leq \lambda_{0}(S_{1}) \leq \lambda_{0}(S_{2})$. Hence, we have that $\lambda_{0}(S_{1}) = \lambda_{0}(S_{0}) < \lambda_{0}^{\ess}(S_{0})$, which yields that $q$ is amenable, from \cite[Theorem 1.2]{Mine2}. Since $\lambda_{0}(S_{2}) = \lambda_{0}(S_{1}) < \lambda_{0}^{\ess}(S_{0})$, Theorem \ref{main} implies that $p$ is $q$-amenable.

Conversely, if $q$ is amenable and $p$ is $q$-amenable, then $\lambda_{0}(S_{2}) = \lambda_{0}(S_{1}) = \lambda_{0}(S_{0})$, where we used Theorem \ref{main} for the first equality, and Theorem \ref{amen thm} for the second one. Since $S_{0}$ has discrete spectrum, \cite[Theorem 1.2]{Mine2} states that $q \circ p$ is amenable.\qed
\end{proof}

\section{Spectral-tightness of Riemannian manifolds}\label{Tightness}

In this section we study the notion of spectral-tightness of Riemannian manifolds. Recall that a Riemannian manifold $M$ is called \textit{spectrally-tight} if $\lambda_{0}(M^{\prime}) < \lambda_{0}(\tilde{M})$ for any non-simply connected, normal covering space $M^{\prime}$ of $M$, where $\tilde{M}$ is the universal covering space of $M$. We begin with a straightforward consequence of Theorem \ref{main}.

\begin{corollary}\label{tightness general}
Let $M$ be a Riemannian manifold with $\lambda_{0}(\tilde{M}) < \lambda_{0}^{\ess}(M)$, where $\tilde{M}$ is the universal covering space of $M$. Then $M$ is spectrally-tight if and only if the unique normal, amenable subgroup of $\pi_{1}(M)$ is the trivial one.
\end{corollary}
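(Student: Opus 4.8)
The statement to prove is Corollary \ref{tightness general}: for a Riemannian manifold $M$ with $\lambda_{0}(\tilde{M}) < \lambda_{0}^{\ess}(M)$, spectral-tightness is equivalent to the triviality of the unique normal amenable subgroup of $\pi_{1}(M)$. The plan is to translate both sides of the asserted equivalence into statements about normal subgroups of $\Gamma := \pi_{1}(M)$ and then apply Theorem \ref{main} with the right choice of coverings. Every non-simply connected normal covering space $M'$ of $M$ corresponds to a nontrivial normal subgroup $N \trianglelefteq \Gamma$, via $M' = \tilde{M}/N$; the tower $\tilde{M} \xrightarrow{p} M' \xrightarrow{q} M$ then has $p$ normal with deck group $N$ and $q$ normal with deck group $\Gamma/N$. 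I will run Theorem \ref{main} on this tower with $S_0 = \Delta$ on $M$, so $S_1 = \Delta$ on $M'$ and $S_2 = \Delta$ on $\tilde{M}$, keeping in mind the crucial simplification noted after Theorem \ref{main}: since $M_2 = \tilde{M}$ is simply connected, $p$ is $q$-amenable if and only if $p$ is amenable, i.e.\ if and only if $N$ is an amenable group.

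For the forward direction I argue contrapositively: suppose $\Gamma$ has a nontrivial normal amenable subgroup $N$. Form $M' = \tilde{M}/N$, which is a non-simply connected normal covering space of $M$. Since $N$ is amenable, the covering $p \colon \tilde{M} \to M'$ is amenable, hence $q$-amenable, so Theorem \ref{main}(i) gives $\lambda_0(\tilde{M}) = \lambda_0(S_2) = \lambda_0(S_1) = \lambda_0(M')$. Thus $M'$ is a normal covering space with maximal bottom of spectrum that is not the universal cover, so $M$ is not spectrally-tight. For the converse, suppose the only normal amenable subgroup of $\Gamma$ is trivial, and let $M'$ be any non-simply connected normal covering space, corresponding to a nontrivial $N \trianglelefteq \Gamma$; then $N$ is not amenable, so $p$ is not amenable, hence not $q$-amenable. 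Now I invoke the hypothesis $\lambda_0(\tilde{M}) < \lambda_0^{\ess}(M)$: this is exactly the hypothesis $\lambda_0(S_1) \le \lambda_0(\tilde M) < \lambda_0^{\ess}(S_0)$ required by Theorem \ref{main}(ii) — more precisely, if we had $\lambda_0(S_2) = \lambda_0(S_1)$, then since $\lambda_0(S_1) = \lambda_0(M') \le \lambda_0(\tilde M) < \lambda_0^{\ess}(M) = \lambda_0^{\ess}(S_0)$, Theorem \ref{main}(ii) would force $p$ to be $q$-amenable, a contradiction. Hence $\lambda_0(M') = \lambda_0(S_1) < \lambda_0(S_2) = \lambda_0(\tilde M)$, which is precisely the definition of spectral-tightness.

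The one genuinely delicate point, which I would state carefully rather than gloss over, is the uniqueness clause: the statement refers to \emph{the} unique normal amenable subgroup of $\pi_1(M)$, so I should note at the outset (or cite) that $\Gamma$ possesses a unique maximal normal amenable subgroup — the join of all normal amenable subgroups is again normal and amenable, since an extension of an amenable group by an amenable group is amenable and a directed union of amenable groups is amenable — so "the unique normal amenable subgroup is trivial" unambiguously means "every normal amenable subgroup is trivial," which is the formulation I actually use above. Beyond that, the proof is essentially a bookkeeping exercise feeding the tower $\tilde{M} \to M' \to M$ into Theorem \ref{main}, and the only thing to be careful about is checking that the inequality $\lambda_0(M') \le \lambda_0(\tilde M)$ (which holds by \eqref{general inequality} applied to $p$) combines with the hypothesis to keep us strictly below $\lambda_0^{\ess}(M)$ so that part (ii) is applicable. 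No other obstacle arises.
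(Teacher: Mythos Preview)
Your proof is correct and follows essentially the same route as the paper's. The only cosmetic difference is that where you invoke the remark after Theorem~\ref{main} (that $p$ is $q$-amenable iff amenable when $M_2=\tilde M$), the paper instead cites Corollary~\ref{normal composition} directly, since $q\circ p$ is the universal covering and hence normal; your aside on the amenable radical is a helpful clarification the paper omits.
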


\begin{proof}
If $M$ is not spectrally-tight, then there exists a normal covering $q \colon M^{\prime} \to M$ with $\lambda_{0}(M^{\prime}) = \lambda_{0}(\tilde{M})$ and $M^{\prime}$ non-simply connected. In view of Theorem \ref{main}, the universal covering $p \colon \tilde{M} \to M^{\prime}$ is $q$-amenable. Since $q \circ p$ is normal, Corollary \ref{normal composition} asserts that $p$ is amenable, or equivalently, that $\pi_{1}(M^{\prime})$ is amenable. It is clear that $\pi_{1}(M^{\prime})$ is a non-trivial, normal subgroup of $\pi_{1}(M)$.

Conversely, if $\pi_{1}(M)$ has a non-trivial, amenable, normal subgroup $\Gamma$, then the action of $\Gamma$ on $\tilde{M}$ gives rise to an amenable Riemannian covering $p \colon \tilde{M} \to \tilde{M} / \Gamma$. We deduce from Theorem \ref{amen thm} that $\lambda_{0}(\tilde{M}) = \lambda_{0}(\tilde{M} / \Gamma)$, while $\tilde{M} / \Gamma$ is a non-simply connected, normal covering space of $M$. \qed
\end{proof}\medskip

\noindent\emph{Proof of Theorem \ref{tightness}:} It is an immediate consequence of Corollary \ref{tightness general}, since the spectrum of the Laplacian on a closed Riemannian manifold is discrete. \qed\medskip

\noindent\emph{Proof of Theorem \ref{neg curv}:} Let $M$ be a non-positively curved, closed Riemannian manifold. Suppose first that the dimension of the Euclidean local de Rham factor of $M$ is non-zero. We derive from \cite[Theorem]{Eberlein} that there exists a non-trivial, normal abelian subgroup of $\pi_{1}(M)$. By virtue of Theorem \ref{tightness}, this shows that $M$ is not spectrally-tight.

Conversely, if $M$ is not spectrally-tight, then there exists a non-trivial, normal amenable subgroup $\Gamma$ of $\pi_{1}(M)$, according to Theorem \ref{tightness}. We obtain from \cite[Corollary 2]{MR875344} that $\Gamma$ is a Bieberbach group. Denote by $G$ the subgroup of translations in $\Gamma$. It should be noticed that $G$ is non-trivial, since otherwise, $\Gamma$ is finite and in particular, there exist non-trivial elements of finite order in $\pi_{1}(M)$, which is a contradiction. Furthermore, it is known that $G$ is the unique maximal normal abelian subgroup of $\Gamma$, and thus, a characteristic subgroup of $\Gamma$. Since $\Gamma$ is a normal subgroup of $\pi_{1}(M)$, we readily see that so is $G$. Since $G$ is isomorphic to $\mathbb{Z}^n$ for some $n \in \mathbb{N}$, we conclude that the rank of the unique normal maximal abelian subgroup of $\pi_{1}(M)$ is non-zero and the proof is completed by \cite[Theorem]{Eberlein}. \qed \medskip

Before proceeding to the proof of Corollary \ref{geom finite}, we need the following observation, which will be also exploited in the examples in the sequel.

\begin{lemma}\label{fundamental group of geom finite}
Let $M$ be a geometrically finite manifold. If $\pi_1(M)$ contains a non-trivial, amenable normal subgroup, then $\pi_1(M)$ is elementary.
\end{lemma}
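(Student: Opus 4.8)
The plan is to use the structure theory of isometry groups of Hadamard manifolds with pinched negative curvature, applied to the discrete group $\Gamma = \pi_1(M)$ acting on $\tilde M$. Write $\tilde M = \tilde M / \{1\}$ as a Hadamard manifold with $-b^2 \leq K \leq -a^2 < 0$, so that $\Gamma$ acts properly discontinuously, freely, and cocompactly-outside-cusps in the geometrically finite sense. First I would recall the trichotomy for such $\Gamma$: the group is either elementary (i.e. it fixes a point of $\tilde M_c = \tilde M \cup \tilde M_i$, or equivalently the limit set $\Lambda$ has at most two points), or it contains a free subgroup on two generators, which is the non-elementary case. Equivalently, a non-elementary discrete group of isometries of a pinched Hadamard manifold always contains a rank-two free subgroup (this is a standard ping-pong argument using two hyperbolic isometries with disjoint fixed-point pairs on the boundary, available once $|\Lambda| \geq 3$). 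Since a rank-two free group is non-amenable and every subgroup of an amenable group is amenable, it will follow that if $\Gamma$ is non-elementary then $\Gamma$ has no non-trivial amenable normal subgroup at all — in fact no non-trivial normal subgroup can be amenable, because any non-trivial normal subgroup $N \trianglelefteq \Gamma$ has limit set $\Lambda(N) = \Lambda(\Gamma)$ when $\Gamma$ is non-elementary (an infinite normal subgroup shares the full limit set), hence $N$ is itself non-elementary and contains a free group of rank two.

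So the key steps, in order, are: (1) identify $\tilde M$ as a Hadamard manifold with pinched negative curvature and $\Gamma = \pi_1(M)$ as a discrete torsion-free isometry group; (2) recall that $\Gamma$ non-elementary forces $|\Lambda(\Gamma)| \geq 3$ and hence, by ping-pong, a free subgroup $F_2 \leq \Gamma$; (3) invoke the fact that a non-trivial normal subgroup $N$ of a non-elementary discrete group of isometries of a Hadamard manifold of pinched negative curvature is itself infinite and non-elementary, with $\Lambda(N) = \Lambda(\Gamma)$; (4) conclude $F_2 \leq N$, contradicting amenability of $N$; (5) therefore the hypothesis that $\pi_1(M)$ carries a non-trivial amenable normal subgroup forces $\pi_1(M)$ to be elementary.

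The main obstacle I anticipate is step (3): one must justify carefully that a non-trivial normal subgroup of a non-elementary geometrically finite group is non-elementary. The cleanest route is: a non-trivial normal subgroup $N \trianglelefteq \Gamma$ in a non-elementary $\Gamma$ cannot be finite (a torsion-free discrete isometry group of a Hadamard manifold has no nontrivial finite subgroups at all since the action is free), so $N$ is infinite; an infinite normal subgroup $N$ has $\Lambda(N)$ a non-empty, closed, $\Gamma$-invariant subset of $\Lambda(\Gamma)$, and since the $\Gamma$-action on $\Lambda(\Gamma)$ is minimal (because $\Gamma$ is non-elementary), $\Lambda(N) = \Lambda(\Gamma)$, whence $|\Lambda(N)| \geq 3$ and $N$ is non-elementary. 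Care is needed to cite or briefly argue minimality of the action on the limit set and the ping-pong lemma in the pinched-curvature (rather than constant-curvature) setting; these are classical but should be attributed properly (e.g. to standard references on geometrically finite groups and discrete subgroups of isometry groups of Hadamard manifolds).
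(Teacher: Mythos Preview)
Your argument is correct and takes a genuinely different route from the paper. The paper proceeds directly: it cites \cite{MR2275833} to conclude that the amenable normal subgroup $G$ is elementary, then splits into the parabolic and axial cases and shows, via an explicit fixed-point computation, that normality of $G$ forces the whole group $\Gamma$ to fix the same ideal point (respectively, geodesic axis) that $G$ fixes, whence $\Gamma$ is elementary. Your approach is the contrapositive through limit-set dynamics: if $\Gamma$ is non-elementary, minimality of the $\Gamma$-action on $\Lambda(\Gamma)$ together with $\Gamma$-invariance of $\Lambda(N)$ forces $\Lambda(N) = \Lambda(\Gamma)$, so $N$ is itself non-elementary and contains a rank-two free group by ping-pong. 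Both arguments rest on the same circle of ideas, but the paper trades the minimality and ping-pong facts for a black-box citation of Belegradek--Kapovitch and a short hands-on computation, while your route avoids that citation at the cost of invoking (and needing to attribute) the classical minimality and ping-pong lemmas in pinched negative curvature. One small wording issue in your step~(4): the $F_2$ inside $N$ is obtained by applying ping-pong to $N$ itself once you know $N$ is non-elementary, not by inheriting the $F_2$ already found inside $\Gamma$ in step~(2); your first paragraph states this correctly, but the numbered outline could be misread as referring to the same subgroup.
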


\begin{proof}
	Denote by $\tilde{M}$ the universal covering space of $M$ and write $M = \tilde{M}/\Gamma$.
	Let $G$ be a non-trivial, amenable normal subgroup of $\Gamma$. It follows from  \cite[Corollary 1.2]{MR2275833} that $G$ is elementary. Bearing in mind that finite groups of isometries of $\tilde{M}$ fix a point in $\tilde{M}$, this yields that $G$ is either parabolic or axial.
	
	Suppose first that $G$ is parabolic and contained in the stabilizer of a point $x$ of the ideal boundary of $\tilde{M}$. Given $h \in \Gamma$, using that $hgh^{-1} \in G$, we readily see that $G$ fixes $h^{-1}x$. This shows that $h^{-1}x = x$ for any $h \in \Gamma$, and thus, $\Gamma$ is parabolic and fixes $x$.
	
	Assume now that $G$ is axial and contained in the stabilizer of the image of a geodesic $\gamma \colon \R \to \tilde{M}$ joining two points of the ideal boundary of $\tilde{M}$. Then for any $g \in G$, there exists $t_g \in \R$ such that $g(\gamma(t)) = \gamma(t + t_g)$ for any $t \in \R$. Given $h \in \Gamma$, there exists $t^\prime \in \R$ such that $hgh^{-1}(\gamma(t)) = \gamma(t+t^\prime)$ for any $t \in \R$, since $hgh^{-1} \in G$. Then $G$ fixes the image of the geodesic $h^{-1} \circ \gamma$, which implies that the images of $h^{-1} \circ \gamma$ and $\gamma$ coincide for any $h \in \Gamma$. We conclude that $\Gamma$ is axial and fixes the image of $\gamma$.\qed
\end{proof}\medskip

\noindent\emph{Proof of Corollary \ref{geom finite}:}
If $M$ is not spectrally-tight, we derive from Corollary \ref{tightness general} that there exists a non-trivial, amenable normal subgroup of $\pi_1(M)$. Then $\pi_1(M)$ is elementary, by Lemma \ref{fundamental group of geom finite}. Conversely, if $\pi_1(M)$ is elementary, then $\lambda_0(M) = \lambda_0(\tilde{M})$, in view of Theorem \ref{amen thm}, $\pi_1(M)$ being amenable. Since $M$ is non-simply connected, this means that $M$ is not spectrally-tight.\qed\medskip

Even though spectral-tightness of closed Riemannian manifolds is a topological property, the next examples illustrate that for non-compact manifolds, this property depends on the Riemannian metric.

\begin{example}
	Let $M$ be a non-compact surface of finite type with non-cyclic fundamental group, and denote by $\tilde{M}$ its universal covering space. Since $M$ is diffeomorphic to a closed surface with finitely many points removed, it is not hard to see that $M$ carries a complete Riemannian metric which is flat outside a compact domain. We derive from Proposition \ref{bottom essential} that $\lambda_0^{\ess}(M) = 0$. Furthermore, there exists a compact $K \subset M$ such that the fundamental group of any connected component of $M \smallsetminus K$ is amenable (as a matter of fact, cyclic). We deduce from \cite[Corollary 1.6]{Mine} that $\lambda_0(\tilde{M}) = 0$, and thus, $M$ is not spectrally-tight with respect to this Riemannian metric.
	
	It follows from \cite[Proposition 1.5 and (1.3)]{BMM} that $M$ carries a complete Riemannian metric such that $\lambda_0^{\ess}(M) > \lambda_0(\tilde{M})$. We now show that $M$ is spectrally-tight with respect to this Riemannian metric. Otherwise, we obtain from Theorem \ref{tightness general} that there exists a non-trivial, amenable normal subgroup of $\pi_1(M)$. It is known that $M$ admits a complete hyperbolic metric, and with respect to such a metric, $M$ is geometrically finite (cf. for instance \cite[Example 1.4]{BMM}). It now follows from Lemma \ref{fundamental group of geom finite} that $\pi_1(M)$ is elementary, and thus, cyclic, $M$ being two-dimensional, which is a contradiction.
\end{example}

Although this gives a quite wide class of examples, it seems reasonable to present a more explicit one.

\begin{example}
Let $M$ be a two-dimensional torus with a cusp attached. Initially, we endow $M$ with a complete Riemannian metric, so that the cusp $D$ is isometric to a domain of a flat cylinder. Then it is easily checked that $\lambda_{0}(M) = \lambda_{0}(D) = \lambda_{0}^{\ess}(M) = 0$. Since the fundamental group of $D$ is amenable, it follows from \cite[Corollary 1.6]{Mine} that $\lambda_{0}(M) = \lambda_{0}(\tilde{M})$, where $\tilde{M}$ stands for the universal covering space of $M$. Therefore, $M$ endowed with this Riemannian metric is not spectrally-tight.

We now endow $M$ with a complete Riemannian metric, so that the cusp is isometric to the surface of revolution generated by $e^{-t^{\alpha}}$ with $t \geq 1$, for some $\alpha > 1$. Then the spectrum of $M$ is discrete (cf. \cite[Theorem 2]{Brooks2}) and Corollary \ref{tightness general} characterizes spectral-tightness. It should be observed that $\pi_{1}(M)$ is the free group $F_{2}$ in two generators. Since the fundamental group of any negatively curved, closed manifold contains a subgroup isomorphic to $F_{2}$, \cite[Theorem 1]{Chen} shows that any amenable subgroup of $F_{2}$ is cyclic. It is not difficult to verify that the unique normal, cyclic subgroup of $F_{2}$ is the trivial one. Hence, $M$ endowed with this Riemannian metric is spectrally-tight, from Corollary \ref{tightness general}.
\end{example}

\begin{bibdiv}
\begin{biblist}
	
\bib{BMM}{article}{
	author={Ballmann, W.},
	author={Matthiesen, H.},
	author={Mondal, S.},
	title={On the analytic systole of Riemannian surfaces of finite type},
	journal={Geom. Funct. Anal.},
	volume={27},
	date={2017},
	number={5},
	pages={1070--1105},
}
	
\bib{BMP1}{article}{
	author={Ballmann, W.},
	author={Matthiesen, H.},
	author={Polymerakis, P.},
	title={On the bottom of spectra under coverings},
	journal={Math. Z.},
	volume={288},
	date={2018},
	number={3-4},
	pages={1029--1036},
	issn={0025-5874},
}

\bib{surv}{article}{
	author={Ballmann, W.},
	author={Polymerakis, P.},
	title={Bottom of spectra and coverings},
	journal={Surv. Differ. Geom.},
	volume={23},
	date={2020},
	pages={1--33},
}

\bib{orbi}{article}{
	author={Ballmann, W.},
	author={Polymerakis, P.},
	title={Bottom of spectra and coverings of orbifolds},
	journal={Internat. J. Math., to appear.},
}

\bib{BP}{article}{
	author={Ballmann, W.},
	author={Polymerakis, P.},
	title={On the essential spectrum of differential operators over geometrically finite orbifolds},
	journal={MPI-preprint 2021-9, arxiv.org/abs/2103.13704.},
}

\bib{MR2275833}{article}{
	author={Belegradek, I.},
	author={Kapovitch, V.},
	title={Classification of negatively pinched manifolds with amenable
		fundamental groups},
	journal={Acta Math.},
	volume={196},
	date={2006},
	number={2},
	pages={229--260},
}

\bib{Bowditch}{article}{
	author={Bowditch, B. H.},
	title={Geometrical finiteness with variable negative curvature},
	journal={Duke Math. J.},
	volume={77},
	date={1995},
	number={1},
	pages={229--274},
}

\bib{Brooks}{article}{
   author={Brooks, R.},
   title={The fundamental group and the spectrum of the Laplacian},
   journal={Comment. Math. Helv.},
   volume={56},
   date={1981},
   number={4},
   pages={581--598},
}

\bib{Brooks2}{article}{
	author={Brooks, R.},
	title={On the spectrum of non-compact manifolds with finite volume},
	journal={Math. Z.},
	volume={187},
	date={1984},
	pages={425--432},
}

\bib{MR875344}{article}{
   author={Burger, M.},
   author={Schroeder, V.},
   title={Amenable groups and stabilizers of measures on the boundary of a
   Hadamard manifold},
   journal={Math. Ann.},
   volume={276},
   date={1987},
   number={3},
   pages={505--514},
   issn={0025-5831},
}

\bib{Chen}{article}{
	author={Chen, S. S.},
	title={On the fundamental group of a compact negatively curved manifold},
	journal={Proc. Amer. Math. Soc.},
	volume={71},
	date={1978},
	number={1},
	pages={119--122},
}

\bib{Cheng-Yau}{article}{
	author={Cheng, S. Y.},
	author={Yau, S. T.},
	title={Differential equations on Riemannian manifolds and their geometric	applications},
	journal={Comm. Pure Appl. Math.},
	volume={28},
	date={1975},
	number={3},
	pages={333--354},
}

\bib{MR544241}{article}{
	author={Donnelly, H.},
	author={Li, P.},
	title={Pure point spectrum and negative curvature for noncompact
		manifolds},
	journal={Duke Math. J.},
	volume={46},
	date={1979},
	number={3},
	pages={497--503},
}

\bib{Eberlein}{article}{
   author={Eberlein, P.},
   title={Euclidean de Rham factor of a lattice of nonpositive curvature},
   journal={J. Differential Geometry},
   volume={18},
   date={1983},
   number={2},
   pages={209--220},
}

\bib{JLW}{article}{
	author={Ji, L.},
	author={Li, P.},
	author={Wang, J.},
	title={Ends of locally symmetric spaces with maximal bottom spectrum},
	journal={J. Reine Angew. Math.},
	volume={632},
	date={2009},
	pages={1--35},
	issn={0075-4102},
}

\bib{LW2}{article}{
	author={Li, P.},
	author={Wang, J.},
	title={Complete manifolds with positive spectrum. II},
	journal={J. Differential Geom.},
	volume={62},
	date={2002},
	number={1},
	pages={143--162},
}

\bib{Mine2}{article}{
	author={P. Polymerakis},
	title={Coverings preserving the bottom of the spectrum},
	journal={J. Spectr. Theory, to appear},
}

\bib{Mine}{article}{
	author={P. Polymerakis},
	title={On the spectrum of differential operators under Riemannian coverings},
	journal={J. Geom. Anal.},
	volume={30},
	date={2020},
	number={3},
	pages={3331--3370},
}

\bib{MR2390354}{article}{
	author={Sambusetti, A.},
	title={Asymptotic properties of coverings in negative curvature},
	journal={Geom. Topol.},
	volume={12},
	date={2008},
	number={1},
	pages={617--637},
	issn={1465-3060},
}

\end{biblist}
\end{bibdiv}

\noindent Max Planck Institute for Mathematics \\
Vivatsgasse 7, 53111, Bonn \\
E-mail address: polymerp@mpim-bonn.mpg.de

\end{document}